\newcommand{\subparagraph}{}
\newtheorem{define}{Definition}
\newtheorem{theorem}{Theorem}
\newtheorem{lemma}{Lemma}
\newtheorem{corr}{Corollary}
\newtheorem{remark}{Remark}
\newtheorem{assume}{Assumption}
\newcommand{\R}{\mathbb R}
\newcommand{\eps}{\epsilon}
\newcommand{\bmx}[1]{\begin{bmatrix}#1\end{bmatrix}} 
\newcommand{\bkt}[1]{\left[#1\right]} 
\newcommand{\pth}[1]{\left(#1\right)} 
\newcommand{\brc}[1]{\left \{#1\right \}} 
\newcommand{\nrm}[1]{\left \lVert#1\right \rVert} 
\newcommand{\ip}[1]{\left \langle #1 \right \rangle}
\newcommand{\bmxs}[1]{\begin{bsmallmatrix}#1\end{bsmallmatrix}}
\DeclarePairedDelimiter{\ceil}{\lceil}{\rceil}
\DeclarePairedDelimiter{\floor}{\lfloor}{\rfloor}
\DeclarePairedDelimiter{\abs}{\lvert}{\rvert}
\newcommand{\rarr}{\rightarrow} 
\let\oldceil\ceil
\def\ceil{\@ifstar{\oldceil}{\oldceil*}}
\let\oldfloor\floor
\def\floor{\@ifstar{\oldfloor}{\oldfloor*}}
\let\oldnorm\norm
\def\norm{\@ifstar{\oldnorm}{\oldnorm*}}
\let\oldabs\abs
\def\abs{\@ifstar{\oldabs}{\oldabs*}}
\let\NAT@parse\undefined
\def\@linkcolor{blue}
  \def\@anchorcolor{red}
  \def\@citecolor{red}
  \def\@filecolor{red}
  \def\@urlcolor{red}
  \def\@menucolor{red}
  \def\@pagecolor{red}
  \edef\x{%
    \edef\noexpand\x{%
      \endgroup
      \noexpand\toks@{%
        \catcode 96=\noexpand\the\catcode`\noexpand\`\relax
        \catcode 61=\noexpand\the\catcode`\noexpand\=\relax
      }%
    }%
    \noexpand\x
  }%
\title{\LARGE \bf
Strong Invariance Using Control Barrier Functions: \\ A Clarke Tangent Cone Approach*
}
\author{James Usevitch, Kunal Garg, and Dimitra Panagou
\thanks{*The authors would like to acknowledge the support of Air Force Office of Scientific Research under award number FA9550-17-1-0284, the Automotive Research Center (ARC) in accordance with Cooperative Agreement W56HZV-14-2-0001 U.S. Army TARDEC in Warren, MI, and of the Award No W911NF-17-1-0526.}
\thanks{James Usevitch, Kunal Garg, and Dimitra Panagou are with the Aerospace Engineering Department at the University of Michigan, Ann Arbor, USA.{\tt\small \{usevitch,kgarg,dpanagou\}@umich.edu}}%
}
\newtheorem{Example}{Example}
\newtheorem{prop}{Proposition}
\newcommand{\cvxc}{\overline{\text{co}}}
\newcommand{\prj}{\textup{proj}}
\begin{document}

\maketitle
\thispagestyle{empty}
\pagestyle{empty}

\begin{abstract}

Many control applications require that a system be constrained to a particular set of states, often termed as safe set. A practical and flexible method for rendering safe sets forward-invariant involves computing control input using Control Barrier Functions and Quadratic Programming methods. Many prior results however require the resulting 
control input to be continuous, which requires strong assumptions or can be difficult to demonstrate theoretically.
In this paper we use differential inclusion methods to show that simultaneously rendering multiple sets invariant can be accomplished using a discontinuous control input.
We present an optimization formulation which computes such control inputs and which can be posed in multiple forms, including a feasibility problem, a linear program, or a quadratic program. In addition, we discuss conditions under which the optimization problem is feasible and show that any feasible solution of the considered optimization problem which is measurable renders the multiple safe sets forward invariant.
\end{abstract}

\section{Introduction}
Safety considerations such as maintaining a safe distance from static or dynamic obstacles for systems like robots, unmanned aerial vehicles, and autonomous cars is a critical concern in modern control theory. Safety requirements and other system objectives, such as confining the system trajectories to remain in a desired operating set, can be modelled as set invariance constraints where the objective is to guarantee that state trajectories remain within specified subsets of the state space under the closed-loop dynamics. Among other approaches, control barrier functions (CBFs) have been studied by many researchers to establish forward invariance of safe sets, thereby guaranteeing safety  and other system objectives are achieved \cite{romdlony2016stabilization,barry2012safety,ames2014control,ames2019control}. 
More recently, in \cite{ames2014control,ames2017control}, conditions using zeroing control barrier functions (ZCBF) are presented to ensure forward invariance of a desired set. 
Other authors have used CBFs to design control input using closed-form expressions that resemble Sontag's formula, e.g., \cite{romdlony2016stabilization}. 

For certain classes of nonlinear systems it can be difficult in general to find closed-form expressions for control inputs that render particular safe sets invariant.
The authors in \cite{ames2014control,li2018formally,ames2017control,glotfelter2017nonsmooth,glotfelter2018boolean} have explored online optimization methods of utilizing CBFs in control design, where typically, a quadratic program (QP) is set up to compute the control input at every point in the state space. In these works the CBF inequalities take form of the linear constraints in the QP. Since the QP needs to be solved pointwise in the state space, it becomes a parameteric optimization problem where the state variable acts as a parameter. The authors in \cite{fiacco1976sensitivity} studied parameteric convex optimization problems, and showed that the solution of the optimization is continuously differentiable if the objective function and the constraints functions are twice continuously differentiable, and strict complementary slackness holds. These conditions are relaxed in \cite{robinson1974perturbed}, where only continuity of these functions is assumed to guarantee that the solution of the parameteric convex optimization problem is a continuous function of the parameter.

In the particular context of control design using QPs, the authors in \cite{ames2017control} showed Lipschitz continuity of the solution of a QP under the assumption that the objective function and the functions defining the constraints in the QP are locally Lipschitz continuous, in the absence of control input constraints (see also \cite{xu2015robustness}). Under similar assumptions, the authors in \cite{glotfelter2017nonsmooth} show that the solution of QP is guaranteed to be Lipschitz continuous (in the absence of input constraints) if the CBF constraints are inactive, i.e., the constraints are satisfied with strict inequality at the optimal solution $z^*$. However demonstrating the Lipschitz continuity of the optimal solution for more general conditions, including when input constraints are incorporated, can be a nontrivial task.

The topic of guaranteeing set invariance under a possibly discontinuous control input has been studied for decades \cite{clarke1995qualitative,aubin2009set,clarke2008nonsmooth,cortes2008discontinuous}. Only somewhat recently has some of this theory been applied to set invariance using CBFs \cite{emam2019robust, glotfelter2017nonsmooth, glotfelter2018boolean}. 
In \cite{glotfelter2018boolean} the forward-invariance of multiple safety sets is considered under a discontinuous control input. Their methods involve incorporating multiple CBFs into a single nonsmooth function and then utilizing the generalized gradient and set-valued Lie derivative to demonstrate forward invariance. This methodology requires computationally tracking the notion of almost active gradients and considering set-valued inner products to generate the required control inputs.



This paper presents a different approach to guaranteeing strong invariance of multiple composed sets as compared to prior literature. 
The first contribution of this paper is to guarantee the simultaneous forward invariance of multiple subsets of the state space using CBFs and incorporating control input constraints. Unlike prior work, we approach the problem using the notions of Clarke tangent cones and transversality. We demonstrate that a constrained control input simultaneously rendering these subsets invariant can be generated by simply solving a feasibility problem with compact linear constraints. The control input is only required to be Lebesgue measurable and is not required to be continuous. In contrast to \cite{glotfelter2017nonsmooth, glotfelter2018boolean}, we demonstrate conditions under which the set-valued map of feasible controls rendering the composed sets strongly invariant is not only upper semicontinuous but also locally Lipschitz on a specified domain.


Our second main contribution is formulating a general convex optimization problem which computes control inputs that simultaneously render multiple subsets invariant. This optimization problem takes the form of a feasibility problem, with special cases being a Linear Program (LP) and QP. In contrast to \cite{glotfelter2017nonsmooth, glotfelter2018boolean} we show that under certain assumptions the proposed optimization problem is feasible, even in the presence of control input constraints. The feasibility of the optimization problem is shown to be sufficient to guarantee forward-invariance of multiple safe sets without requiring a continuity property of its solution as a function of the system states.

 
\section{Notation}
\label{sec:notation}
The boundary of a set $S \subset \R^n$ is denoted $\partial S$.
The closed convex hull of $S$ is $\cvxc(S)$.
The distance function associate with the set $S$ at $x \in \R^n$, denoted $d_S(x)$, is defined as $d_S(x) = \inf\brc{\nrm{x - s} : s \in S}$.
The polar cone of the set $S$ is the set $S^\circ = \brc{y \in \R^n : \ip{y,x} \leq 0\ \forall x \in S}$.

For notational brevity, given subsets $U \subset \R^{n \times m}$, $V \subset \R^{m \times p}$ the set-valued matrix product is defined as $U V = \{AB : A \in U,\ B \in V \} \subset \R^{n \times p}$. The Minkowski sum is denoted $U + V = \{A + B : A \in U,\ B \in V \}$. The Minkowski difference is $U_1 - U_2 = (U_1^c + U_2)^c$, where the set complement denotes $U^c = \R^{n \times m} \backslash U$.
Given a function $f : \R^{n \times m} \rarr \R^{m \times p}$, we denote $f(U) = \{f(A) : A \in U \}$.
The norm $\|\cdot\|$ in this paper refers to any sub-multiplicative matrix norm, i.e. $\|AB\| \leq \|A\| \|B\|$.

The open unit ball on a vector space $\R^{n \times m}$ is denoted 
The closed unit ball is denoted $\bar{B}^{{n \times m}}(0,1) = \cvxc(B^{n \times m}(0,1))$. The unit ball will be denoted as simply $B(0,1)$ when the dimensions are clear from the context.

The gradient of a continuously differentiable function $h : \R^n \rarr \R$ is denoted $\frac{\partial h}{\partial x}$, or in some cases as $\nabla h$. We use $h\in \mathcal C^{1,1}_{loc}$ to denote a continuously differentiable function, whose gradient $\nabla h$ is locally Lipschitz continuous.
The Lie derivative of a continuously differentiable function $h : \R^n \rarr \R$ along a vector field $f : \R^n \rarr \R^n$ is denoted $L_f h(x) \triangleq  \frac{\partial h}{\partial x} f(x)$.
A function $g : \R^n \rarr \R$ is \emph{locally bounded} on a set $D \subseteq \R^n$ if for all $x \in D$ there exists a neighborhood of $x$ denoted $U(x)$ and a constant $M \in \R$ such that $\nrm{g(z)} \leq M$ for all $z \in U(x)$.

\section{Problem Formulation}

Consider the control affine system
\begin{align}
\label{eq:controlaffine}
\begin{aligned}
    \dot{x}(t) &= f(x(t)) + g(x(t))u(t), \\
    u(t) &\in \mathcal{U} \subset \R^m\ \forall t \geq t_0.
\end{aligned}
\end{align}
The functions $f: \R^n \rarr \R^n$ and $g : \R^n \rarr \R^{n \times m}$ are assumed to be locally Lipschitz on $\R^n$. Without loss of generality, we let $t_0 = 0$. The set $\mathcal{U} \subset \R^m$ represents the set of \emph{feasible controls} for the system.

\begin{assume}
\label{assume:U}
The set $\mathcal{U}$ is a compact, convex polytope with $\textnormal{int}(\mathcal{U}) \neq \emptyset$ which has the form
\begin{align}
\begin{aligned}
\label{eq:upolytope}
\mathcal{U} &= \{u \in \R^m : A_u u \leq b_u \}, \\ A_u &\in \R^{p \times m},\ b_u \in \R^{p \times 1}
\end{aligned}
\end{align}
where $A_u$, $b_u$ are constant.
\end{assume}
Constraints of this form are common in prior literature \cite{ames2016control,cortez2019control,ames2017control,ames2014rapidly}.

\begin{Example}
    A specific example of control constraints satisfying Assumption \ref{assume:U} is bounding the input by an infinity norm, e.g. $\nrm{u}_\infty \leq u_{\max} \in \R$. This can be expressed in the form of \eqref{eq:upolytope} by setting
    $A_u = I_{m \times m} \otimes \bmxs{1 \\ -1}$,
        $b_u = (u_{\max}) \bm 1_{2m}$.
\end{Example}


The objective of the system \eqref{eq:controlaffine} is to compute a 
control input in order to simultaneously guarantee satisfaction of multiple set invariance constraints. 
The precise definition of strong invariance is given in Definition \ref{def:stronginv} below.
For the sake of generality, we give the definition in terms of differential inclusions of the form $\dot{x} \in F(x)$, which includes single-valued functions $F(x) = \{f(x,u) \}$ as a special case.
An overview of concepts related to differential inclusion theory is given in the Appendix, Section \ref{sec:diffincl}.

\begin{define}[\cite{clarke1995qualitative}]
\label{def:stronginv}
    Consider a differential inclusion $\dot{x}(t) \in F(x(t))$ and let $S \subset \R^n$. The system pair $(S,F)$ is said to be strongly invariant if \textbf{\textup{all}} trajectories of the system $x(\cdot)$ with $x(0) \in S$ satisfy $x(t) \in S$ for all $t \geq 0$.
\end{define}

More specifically, it is required that the system \eqref{eq:controlaffine} satisfy a \textbf{composition} of set invariance constraints encoded by sets $S_i \subseteq \R^n$, $i=1,\ldots,N_h$. Each set $S_i$ is defined as the sublevel\footnote{It is also common in prior literature to define each $S_i$ in terms of \emph{superlevel} sets, e.g. \cite{ames2014control}.} set of a continuous function $h_i : \R^n \rarr \R$ as follows:
\begin{align}
\begin{aligned}
\label{eq:lotsofsets}
    S_i &= \{x \in \R^n : h_i(x) \leq 0 \}, \\
    \textnormal{int}(S_i) &= \{x \in \R^n : h_i(x) < 0\}, \\
    \partial S_i &= \{x \in \R^n : h_i(x) = 0 \}.
    \end{aligned}
\end{align}

\noindent To characterize the properties of each $h_i$, we will use the notion of \textit{strict} CBFs: 

\begin{define}
The continuously differentiable function $h : \R^n \rarr \R$ is called a \textbf{strict} CBF for the set $S \subset \R^n$ defined as $S = \{x\; |\; h(x)\leq 0\}$ if the following holds:
\begin{align}
\label{eq:necessary}
    \inf_{u \in \mathcal{U}} \bkt{L_f h(x) + L_g h(x) u} < 0\ \forall x \in \partial S,
\end{align}
where $f,g$ are defined as in \eqref{eq:controlaffine}.
\end{define}

Note that the authors in \cite{ames2017control} call $h$ a CBF if \eqref{eq:necessary} holds with a non-strict inequality. Although the condition in \eqref{eq:necessary} may be stronger than necessary when $u(t)$ is guaranteed to be continuous, the property in \eqref{eq:necessary} will be useful when guaranteeing set invariance without a continuous control input. 
It is worth noting that this condition is required in \cite[Prop. 3]{glotfelter2017nonsmooth} to guarantee forward-invariance using a control input defined as a solution of a QP.



\begin{assume}
\label{assume:necessary}
Each $h_i$ from \eqref{eq:lotsofsets} satisfies $h_i\in \mathcal C^{1,1}_{loc}$, and is a strict CBF.
\end{assume}

\begin{assume}
\label{assume:compactS}
Each set $S_i$ is compact.
\end{assume}

\begin{remark}
Assumption \ref{assume:compactS} will aid the analysis of preventing possible finite escape time behavior of solutions. 
One way to guarantee compactness of closed-loop trajectories is having the closed-loop trajectories required to reach a desired equilibrium point or a set. It is possible to encode such requirements using CLFs in the optimization framework \cite{ames2017control,li2018formally,garg2019prescribed}. Such convergence constraints 
will be investigated in future work.
\end{remark}

Before we present the main results, we review forward-invariance of a set for differential inclusions. Consider a differential inclusion
\begin{align}\label{eq:diff inc}
 \dot{x}(t) \in F(x(t)),   
\end{align}
where $F : \R^n \rarr \mathcal{P}(\R^n)$.
Let $S \subseteq \R^n$ and let $T_S(x)$ be the tangent cone to $S$ at $x$, as defined in Definition \ref{def:tangentcone}.

The following \emph{Standing Hypotheses} will be used in this paper:
\begin{define}\label{def:standinghyp}
The following conditions are termed the \emph{Standing Hypotheses}:
\begin{itemize}
    \item[a)] For every $x \in D \subseteq \R^n$, $F(x)$ is nonempty, compact, and convex
    \item[b)] $x \mapsto F(x)$ is upper semicontinuous;
    \item[c)] $F(x)$ is locally bounded; i.e. for all $x \in \R^n$ there exist $\epsilon, m > 0$ such that $\nrm{z} \leq m$ for all $z \in F(y)$, for all $y \in B(x,\eps)$.
\end{itemize}
\end{define}

The following fundamental theorem describes how strong invariance of a set can be achieved with respect to a system described by a differential inclusion. 

\begin{theorem}[Adapted from \cite{clarke1995qualitative}]
\label{thm:Clarkestrong}
Let $F$ be locally Lipschitz and suppose that $F$ satisfies the Standing Hypotheses (Definition \ref{def:standinghyp}). Then the following are equivalent for \eqref{eq:diff inc}:
\begin{itemize}
    \item[(1)] $F(x) \subseteq T_S(x)$ $\forall x \in S$;
    \item[(2)] $(S,F)$ is strongly invariant.
\end{itemize}
\end{theorem}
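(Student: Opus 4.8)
The statement is an equivalence, so the plan is to prove the two implications separately, leaning throughout on the distance function $d_S$ and the polarity between the Clarke tangent cone $T_S(x)$ and the Clarke normal cone $N_S(x) := T_S(x)^\circ$. The key reformulation is that $F(x) \subseteq T_S(x)$ is equivalent to the Hamiltonian-type inequality $\ip{\zeta, v} \leq 0$ for every $v \in F(x)$ and every $\zeta \in N_S(x)$; since the Clarke normal cone is generated by limits of proximal normals to $S$, it suffices to check this inequality against proximal normals.

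For the direction (1) $\Rightarrow$ (2) I would run a Lyapunov-style argument on $d_S$. Fix a trajectory $x(\cdot)$ of \eqref{eq:diff inc} with $x(0) \in S$, so $d_S(x(0)) = 0$, and let $K$ be a local Lipschitz constant for $F$. At any time $t$ with $x(t) \notin S$, let $s$ be a closest point of $S$ to $x(t)$; then $\eta := (x(t) - s)/\nrm{x(t) - s}$ is a unit proximal normal to $S$ at $s$ and coincides with $\nabla d_S(x(t))$, so the reformulation above yields $\ip{\eta, w} \leq 0$ for all $w \in F(s)$. The aim is to deduce the differential inequality $\tfrac{d}{dt} d_S(x(t)) \leq K\, d_S(x(t))$; once this holds, Gr\"onwall's inequality together with $d_S(x(0)) = 0$ forces $d_S(x(t)) = 0$, i.e. $x(t) \in S$, for all $t \geq 0$.

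For the direction (2) $\Rightarrow$ (1) I would argue by contraposition. If $F(x_0) \not\subseteq T_S(x_0)$ at some $x_0 \in S$, there exist $v \in F(x_0)$ and $\zeta \in N_S(x_0)$ with $\ip{\zeta, v} > 0$. Using the Standing Hypotheses and the local Lipschitz property of $F$ to guarantee existence of trajectories, I would construct a solution issuing near $x_0$ whose distance to $S$ strictly increases, contradicting strong invariance. The nonempty, compact, convex structure of $F$ and its upper semicontinuity are what permit selecting such an escaping velocity consistently in a neighborhood.

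The main obstacle is the differential inequality in (1) $\Rightarrow$ (2). The Hamiltonian bound controls $\ip{\eta, w}$ only at the projection $s$, whereas the actual velocity $\dot{x}(t)$ lies in $F(x(t))$, not $F(s)$; closing this gap is precisely where local Lipschitzness of $F$ enters, giving $\dot{x}(t) \in F(s) + K\, d_S(x(t))\, B(0,1)$ and hence $\tfrac{d}{dt} d_S(x(t)) = \ip{\eta, \dot{x}(t)} \leq K\, d_S(x(t))$. Making the nonsmooth chain-rule step for $\tfrac{d}{dt} d_S(x(t))$ rigorous at points where $d_S$ fails to be differentiable (via proximal subgradients of $d_S$) is the real technical content, rather than either implication at the conceptual level. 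Since the theorem is adapted from \cite{clarke1995qualitative}, a shorter route is to perform only the polarity translation of the first paragraph and then invoke the strong-invariance theorem of \cite{clarke1995qualitative} stated directly in terms of proximal normals.
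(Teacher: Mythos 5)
The paper gives no proof of this theorem at all---it is imported wholesale from \cite{clarke1995qualitative}---so the only benchmark is the proof in that reference, and your sketch reconstructs precisely that argument: the polar/proximal-normal reformulation of $F(x) \subseteq T_S(x)$, the Gr\"onwall estimate on $t \mapsto d_S(x(t))$ obtained from the Lipschitz inclusion $F(x(t)) \subseteq F(s) + L\, d_S(x(t))\,\bar{B}(0,1)$ at a closest point $s = \prj_S(x(t))$, and an escape-trajectory contradiction for the necessity direction; your closing remark is also the accurate assessment, since within this paper the ``proof'' is the citation itself. The one slip worth correcting is your attribution in the converse: the continuous selection $y \mapsto \prj_{F(y)}(v)$ used to launch a trajectory with prescribed initial velocity $v \in F(x_0)$ exists because $F$ is locally Lipschitz (or at least continuous) with convex compact values---upper semicontinuity alone is too weak there, and indeed the equivalence fails for merely upper semicontinuous $F$.
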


\begin{remark}
    In prior literature the condition \emph{c)} above is sometimes replaced by the following linear growth condition:
    \begin{itemize}
        \item[$c'$)] For certain constants $\gamma$ and $c$, and for all $x \in D \subseteq \R^n$,
        \begin{align}
            v \in F(x) \implies \|v\| \leq \gamma \|x\| + c.
        \end{align}
    \end{itemize}
    The condition $c')$ is a sufficient condition to ensure that the system does not exhibit finite escape time (\cite{clarke2008nonsmooth}, \emph{Notes and Comments, Ch. 4}). Other methods can be used however to guarantee that finite escape time is avoided.
\end{remark}

Given the sets $S_1,\ldots,S_{N_h}$ defined by functions $h_1,\ldots,h_{N_h}$, the purpose of this paper is to demonstrate a method of computing a measurable, possibly discontinuous control input $u$ which simultaneously renders the sets strongly invariant by using the result on strong invariance in Theorem \ref{thm:Clarkestrong}.


\section{Main Results}

In this section we demonstrate how the conditions of Theorem \ref{thm:Clarkestrong} can be satisfied by design through solving a feasibility problem.
We will approach this problem by designing a differential inclusion of the form
\begin{align}
    G(x) = \{f(x) + g(x)u : u \in K(x) \}
\end{align}
where the set-valued map $K : \R^n \rarr \mathcal{P}(\R^m)$ satisfies $K(x) \subseteq \mathcal{U}$ for all $x \in \R^n$. 
The behavior of \eqref{eq:controlaffine} under any Lebesgue measurable $u{\color{red}(t)}\in K(x)$ can then be studied by analyzing $G(x)$. This is a common method in the literature for considering all trajectories of a controlled system simultaneously \cite[Ch. 3, \S 15]{filippov2013differential}, \cite[Eq. (1.2)]{clarke1995qualitative}, \cite[Ch. 10]{aubin2009set}, \cite[Eq. (34)]{cortes2008discontinuous}.

\subsection{Invariance of a Single Set}

For simplicity of presentation, the first portion of our results will consider a system with only one set $S = \brc{x : h(x) \leq 0}$ to be rendered invariant. Considering multiple sets will then be analyzed in Section \ref{sec:multiplesets}.

We begin by defining the set-valued map $K(\cdot)$. In the prior work (see e.g., \cite{ames2017control}), the forward invariance of a single set was guaranteed by considering a locally Lipschitz continuous control input $u(t)$ within the set
\begin{align}
    \{u \in \mathcal{U} : L_fh(x) + L_gh(x)u \leq -\alpha(h(x)) \},
\end{align}
for all $t\geq 0$. Inspired by this method, consider the set-valued map
\begin{align}
\begin{aligned}
\label{eq:Kforreal}
        &K(x) = \brc{u \in \R^m : \bmx{A_S(x) \\ A_u} u \leq \bmx{b_S(x) \\ b_u}},
\end{aligned}
\end{align}
where $A_S : \R^n \rarr \R^{q \times m}$ and $b_S : \R^n \rarr \R^{q}$ are defined in this case as
\begin{align}
    A_S(x) &= L_g h(x),\\
    b_S(x) &= -\alpha(h(x)) - L_f h(x).
\end{align}
Here, $\alpha(\cdot)$ is an extended class-$\mathcal{K}_\infty$ function which is locally Lipschitz on $\R$.
Note that $A_S(x)$ and $b_S(x)$ are each locally Lipschitz on $\R^n$. This holds since by \eqref{eq:controlaffine} and Assumption \ref{assume:necessary} the functions $f$, $g$, $\frac{\partial h}{\partial x}$ are locally Lipschitz on $\R^n$, and the sums and products of locally Lipschitz functions on $\R^n$ are also locally Lipschitz on $\R^n$. The set $K(x)$ can be considered as the \emph{feasible set} of the combined set invariance and control input constraints for $x\in S$.
In preparation for later results we define the set $\Omega \subset \R^n$ as
\begin{align}
\label{eq:Omega}
    \Omega = \{x \in \R^n: \textnormal{int}(K(x)) \neq \emptyset \}.
\end{align}
Note that under Assumption \ref{assume:necessary}, it holds that $\partial S \subset \Omega$ and $\textnormal{int}(S \cap \Omega) \neq \emptyset$. The following result demonstrates conditions under which the interior of $K$ is a locally Lipschitz set-valued map.

\begin{lemma}
\label{lem:KLips}
    Let $K$ be defined as in \eqref{eq:Kforreal}.
    If $A_S$, $b_S$ are locally Lipschitz on a bounded, open set $D \subseteq \Omega$, then $\textup{int}(K)$ is locally Lipschitz continuous on $D$.
\end{lemma}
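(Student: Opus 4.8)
The plan is to prove that the closed polytope-valued map $K$ is locally Lipschitz in the Hausdorff distance $d_H$ and then transfer this to $\textnormal{int}(K)$. Write $M(x) = \bmx{A_S(x) \\ A_u}$ and $c(x) = \bmx{b_S(x) \\ b_u}$, so $K(x) = \{u \in \R^m : M(x)u \leq c(x)\}$. Since the block $A_u u \leq b_u$ is always present, $K(x) \subseteq \mathcal{U}$; hence each $K(x)$ is a compact convex polytope (a closed subset of the compact set $\mathcal{U}$), and $M,c$ are locally Lipschitz on $D$ because $A_S,b_S$ are and $A_u,b_u$ are constant. For every $x \in D \subseteq \Omega$ the set $K(x)$ is full-dimensional, so $\overline{\textnormal{int}(K(x))} = K(x)$; and because $d_H$ of bounded sets is invariant under taking closures (the map $v \mapsto d(v,\,\cdot\,)$ is continuous, so suprema over a set and its closure agree), $d_H(\textnormal{int}(K(x)),\textnormal{int}(K(y))) = d_H(K(x),K(y))$. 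Thus it suffices to establish a local Hausdorff-Lipschitz bound for $K$ itself.

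Next I would install a uniform Slater condition on a neighborhood. Fix $x_0 \in D$; since $x_0 \in \Omega$, pick $u_0 \in \textnormal{int}(K(x_0))$, giving $M(x_0)u_0 \leq c(x_0) - \delta \mathbf{1}$ for some $\delta > 0$. By local Lipschitz continuity of $M,c$ and compactness of $\mathcal{U}$ (which bounds $\nrm{u_0}$), there is a bounded neighborhood $N \subseteq D$ of $x_0$ and constants $L_M,L_c$ with $\nrm{M(x)-M(y)} \leq L_M\nrm{x-y}$ and $\nrm{c(x)-c(y)} \leq L_c\nrm{x-y}$ on $N$, small enough that $M(x)u_0 \leq c(x) - \tfrac{\delta}{2}\mathbf{1}$ for all $x \in N$. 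Thus $u_0$ is a single Slater point with a uniform margin $\tfrac{\delta}{2}$ for every $K(x)$, $x\in N$.

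The core estimate is a convex-combination argument. Fix $x,y \in N$, take any $u \in K(x) \subseteq \mathcal{U}$, and set $R = \sup_{w\in\mathcal{U}}\nrm{w}$, $D_{\mathcal{U}} = \textnormal{diam}(\mathcal{U})$. Writing $M(y)u = M(x)u + (M(y)-M(x))u$ and $c(x) = c(y) + (c(x)-c(y))$ yields, componentwise, $M(y)u \leq c(y) + \eta\mathbf{1}$ with $\eta := (L_c + R\,L_M)\nrm{x-y} =: C_1\nrm{x-y}$. For $v_\lambda = (1-\lambda)u + \lambda u_0$, combining this with $M(y)u_0 \leq c(y) - \tfrac{\delta}{2}\mathbf{1}$ gives $M(y)v_\lambda \leq c(y) + [(1-\lambda)\eta - \lambda\tfrac{\delta}{2}]\mathbf{1} \leq c(y)$ as soon as $\lambda \geq \eta/(\eta + \delta/2)$. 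Taking $\lambda = \eta/(\eta+\delta/2) \leq 2\eta/\delta$ gives $v_\lambda \in K(y)$ and $\nrm{v_\lambda - u} = \lambda\nrm{u_0 - u} \leq (2C_1 D_{\mathcal{U}}/\delta)\nrm{x-y}$, so $d(u,K(y)) \leq L\nrm{x-y}$ with $L = 2C_1 D_{\mathcal{U}}/\delta$, uniformly in $u \in K(x)$. Swapping $x$ and $y$ gives $d_H(K(x),K(y)) \leq L\nrm{x-y}$, and since $x_0 \in D$ was arbitrary, $\textnormal{int}(K)$ is locally Lipschitz on $D$.

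The main obstacle is securing the uniform positive margin $\delta$ on a neighborhood; once that is in hand the rest is bookkeeping, and I deliberately avoid invoking Hoffman's bound with a varying matrix, whose constant is only locally bounded and can blow up as rows of $M(x)$ degenerate. The delicate case is a vanishing row of $A_S(x)=L_gh(x)$: such a row contributes a constraint $0 \leq (b_S(x))_i$ carrying no margin in $u$, so the Chebyshev-type margin function $x \mapsto \max_u\min_i\big((c(x))_i - (M(x)u)_i\big)$ need not be positive even where $\textnormal{int}(K(x))\neq\emptyset$. I would control this using openness of $D \subseteq \Omega$ together with the strict-CBF property (Assumption \ref{assume:necessary}): on $\partial S$ a vanishing $L_gh$ forces $L_fh < 0$, hence a strictly positive right-hand side, which rules out a simultaneously vanishing row and right-hand side; shrinking $N$ then keeps every $K(x)$-row strictly slack at $u_0$, restoring the uniform margin used above.
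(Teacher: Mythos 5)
Your core estimate is correct, and it takes a genuinely different, more self-contained route than the paper. Where you prove the perturbation bound by hand --- a Slater point $u_0$ with uniform margin $\delta/2$ on a neighborhood, then the convex combination $v_\lambda=(1-\lambda)u+\lambda u_0$ with $\lambda\approx 2\eta/\delta$ --- the paper forms $\phi^*(x,u)=\max_j\phi_j(x,u)$ from the rows of the combined system, checks that $\phi^*$ is locally Lipschitz in $(x,u)$ and convex in $u$, and invokes the cited Proposition \ref{prop:Freeman} to conclude that $\bar F(x)=\brc{u\in\mathcal U:\phi^*(x,u)<0}$ is locally Lipschitz on $\textup{Dom}(\bar F)$, finally asserting $\bar F(x)=\textnormal{int}(K(x))$. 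Your argument is in effect an elementary proof of exactly the case of that proposition that is needed; the Hausdorff-versus-Definition-\ref{def:locallyLip} bookkeeping in your first paragraph is harmless and can even be skipped, since taking $\lambda$ strictly above your threshold places $v_\lambda$ in $\textnormal{int}(K(y))$ directly.

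The gap is precisely the ``delicate case'' you flag, and your proposed patch does not close it: the strict-CBF property constrains $h$ only on $\partial S$ (there $h=0$ gives $b_S=-L_fh>0$ whenever $L_gh=0$, as you say), but the lemma demands Lipschitz continuity on all of $D\subseteq\Omega$, and nothing in Assumptions \ref{assume:U}--\ref{assume:compactS} prevents $L_gh(x_0)=0$ and $b_S(x_0)=-\alpha(h(x_0))-L_fh(x_0)=0$ simultaneously at a point with $h(x_0)<0$. Worse, at such points the conclusion can genuinely fail. Take $n=2$, $m=1$, $g\equiv(1,0)^\T$, $\mathcal U=[-1,1]$, $h(x)=(x_1^2+x_2^2)/2-c$, $\alpha(s)=s$, and $f=(0,f_2)^\T$ with $f_2(x)=-\alpha(h(x))/x_2$ on a small ball $D$ around $x_0=(0,\rho)$, $0<\rho<\sqrt{2c}$, extended by a Lipschitz cutoff away from $D$ so that $L_fh<0$ on $\partial S$ (so Assumption \ref{assume:necessary} holds). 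On $D$ one computes $A_S(x)=x_1$ and $b_S(x)\equiv 0$, hence $K(x)=\brc{u\in[-1,1]:x_1u\le 0}$, which equals $[-1,0]$, $[0,1]$, or $[-1,1]$ according to the sign of $x_1$; every $x\in D$ lies in $\Omega$ as defined in \eqref{eq:Omega}, yet for $x_1=(\eps,\rho)$ and $x_2=(-\eps,\rho)$ the containment $\textnormal{int}(K(x_2))=(0,1)\subseteq(-1,0)+2L\eps\,\bar B(0,1)$ fails for small $\eps$, so $\textnormal{int}(K)$ is not locally Lipschitz at $x_0$. Consequently no argument can succeed on all of $\Omega$: your proof (and the lemma) is valid exactly on the Slater set $\brc{x:\exists u,\ A_S(x)u<b_S(x),\ A_uu<b_u}$, where nonempty interior and strict solvability coincide. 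For what it is worth, the paper's own proof has the same hole --- Proposition \ref{prop:Freeman} yields Lipschitzness only on $\textup{Dom}(\bar F)$, which is the Slater set, and the final identification $\bar F(x)=\textnormal{int}(K(x))$ fails at precisely the degenerate points above --- but you should state the restriction rather than claim the result on all of $D\subseteq\Omega$.
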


\begin{proof}
    The proof will employ the result in \cite[Prop. 2.14]{freeman2008robust}, which is included as Proposition \ref{prop:Freeman} in the Appendix for convenience.
    Define $\mathcal{U}(x) = \{u : A_u u \leq b_u\}$ which has nonempty, compact, convex values for all $x \in D \subseteq \R^n$.
    Observe that $K(x)$ is equivalent to $K(x) = \{u : A_u u \leq b_u\} \cap \{u : A_S(x) u \leq b_S(x) \}$.  Let $\phi_j(x,u) = A_{S,j}(x)u - b_{S,j}(x)$ where $A_{S,j}(x)$ is the $j$th row of $A_S(x)$. 
    Since $A_S$ and $b_S$ are locally Lipschitz on $D$, for fixed $u \in \R^m$ each $\phi_j(\cdot,u)$ is locally Lipschitz on $D$.
    Note that $\phi_j(x,u)$ is affine in $u$ for fixed $x$ for all $j = 1,\ldots,q$, and therefore $\phi_j(x,\cdot)$ is convex and locally Lipschitz on $\R^m$. By Lemma \ref{lem:twolocalLipschitz} (see Appendix) this implies that each $\phi_j : D \times \R^m \rarr \R$ is locally Lipschitz on $D \times \R^m$.
    Similarly, let $\phi_k(x,u) = A_{u,k} u - b_{u,k}$ for all $k=q+1,\ldots,q+p$. The function $\phi_k(\cdot,u)$ for fixed $u$ is constant in $x$ and therefore locally Lipschitz on $D$. The function $\phi_k(x,\cdot)$ for fixed $x$ is affine in $u$ and therefore convex and locally Lipschitz on $\R^m$. By Lemma \ref{lem:twolocalLipschitz} this implies that the functions $\phi_k : D \times \R^m \rarr \R$ are locally Lipschitz on $D \times \R^m$ for $k=q+1,\ldots,q+p$.
    
    
    Define $\phi^*(x,u) = \max_{j} \phi_j(x,u)$. Since the pointwise maximum over convex functions is convex \cite[Sec. 3.2.3]{boyd2004convex}, the function $\phi^*(x,\cdot)$ is therefore convex in $u$ for all fixed $x$.
    We next prove that $\phi^*$ is locally Lipschitz on $D \times \R^m$.
    Define the neighborhood $U^*(x,u) = \bigcap_i U_i(x,u)$ where each $U_i(x,u)$ is the Lipschitz neighborhood for each $\phi_i(x,u)$.
    Observe that since the range of each $\phi^j$ and $\phi^*$ is $\R$, for any $\bmxs{x_1 \\ u_1}, \bmxs{x_2 \\ u_2} \in U^*(x,u) $ we have
    \begin{align*}
        &|\phi^*(x_1,u_1) - \phi^*(x_2,u_2)| = |\max_i \pth{\phi_i(x_1,u_1)} - \\
        & \hspace{4.5cm} \max_i \pth{\phi_i(x_2,u_2)}| \\
        &\leq \max_i \pth{|\phi_i(x_1,u_1) - \phi_i(x_2,u_2) |} \\
        &\leq \max_i\pth{L_i(x,u)} \nrm{\bmx{x_1 \\ u_1} - \bmx{x_2 \\ u_2}}
    \end{align*}
    The function $\phi^*$ is therefore locally Lipschitz on $D \times \R^m$. From Proposition \ref{prop:Freeman} in Appendix, the set-valued mapping $\bar{F}(x) = \{z \in \mathcal{U}(x) : \phi^*(x,u) < 0 \}$ is therefore locally Lipschitz on $D$. However since each $\phi_j(x,u) < 0$ if and only if $A_{S,j}(x) u - b_{S,j}(x) < 0$ for $j = 1,\ldots,q$ and each $\phi_k(x,u) < 0$ if and only if $A_{u,k} u - b_{u,k} < 0$ for $k = 1,\ldots,p$, we therefore have $\bar{F}(x) = \text{int}(K(x))$ which concludes the proof.
\end{proof}

The previous Lemma demonstrated that the function $\textnormal{int}(K)$ is locally Lipschitz on any bounded, open subset $D \subset \Omega$. However, to construct a $G(x)$ which is compact we will need to consider a \emph{closed} bounded set-valued map which is a subset of $\textnormal{int}(K(x))$ at every point $x$, and also locally Lipschitz on $D$.
Towards this end, consider a bounded, open domain $D \subset \Omega$ and let $0 < \gamma < \inf_{x \in D} R_C(K(x))$, where 
\begin{align}
\label{eq:chebyshev}
    R_C(S) \triangleq \sup_{u \in S} d_{S^c}(u),\ S^c = \R^m \backslash S,
\end{align}
is the radius of the largest ball which can be inscribed in $S \subset \R^m$ \cite[Sec 8.5]{boyd2004convex}.
We define the $\gamma$ contraction of $K(x)$ as
    \begin{align}
    \label{eq:Kgamma}
        K_{\gamma}(x) &= \textnormal{int}(K(x)) - \gamma B(0,1), \\
        &= \{u \in K(x) : d_{K^c}(u) \geq \gamma \},\ K^c = \R^m \backslash K(x), \nonumber 
    \end{align}
    where $\textnormal{int}(K(x)) - \gamma B(0,1)$ denotes the Minkowski difference operation defined in Section \ref{sec:notation}.
Note that $K_\gamma(x)$ is closed for all $x$ in $D \subset \Omega$, since $B(0,1)$ denotes the open ball of radius 1. The parameter $\gamma$ is chosen in a such a way to guarantee that $K_\gamma(x)$ is nonempty for all $x \in D$.


\begin{lemma}
\label{lem:kgamma}
    If $\textnormal{int}(K)$ is locally Lipschitz on a bounded, open set $D \subset \Omega$, then for any $0 < \gamma < \inf_{x \in D} R_C(x)$ it holds that $K_{\gamma}$ is locally Lipschitz on $D$. 
\end{lemma}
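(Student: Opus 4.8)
The plan is to reduce Lemma~\ref{lem:kgamma} to the already-established Lemma~\ref{lem:KLips} by exhibiting $K_\gamma(x)$ as a polytope-valued map of the same form as $K(x)$, but with inward-shifted offsets. First I would compute the erosion explicitly. Writing $K(x)=\{u : a_i(x)^\T u \le c_i(x)\}$, where the rows $a_i(x)$ and offsets $c_i(x)$ collect the stacked data $(A_S(x),A_u)$ and $(b_S(x),b_u)$, for any $u\in K(x)$ the distance to the complement is $d_{K^c}(u)=\min_i (c_i(x)-a_i(x)^\T u)/\|a_i(x)\|$, the minimum of the signed distances to the facet hyperplanes. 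Hence the defining condition $d_{K^c}(u)\ge\gamma$ in \eqref{eq:Kgamma} is equivalent to the finite system $a_i(x)^\T u \le c_i(x)-\gamma\|a_i(x)\|$ for every $i$. This shows $K_\gamma(x)$ is exactly the polytope obtained from $K(x)$ by translating each facet inward by $\gamma$, keeping the normals $a_i(x)$ and using the new offsets $\tilde c_i(x)=c_i(x)-\gamma\|a_i(x)\|$.

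Second, I would verify that this shifted polytope satisfies the hypotheses of Lemma~\ref{lem:KLips}. The normals are unchanged, so $A_S$ is locally Lipschitz on $D$ and $A_u$ is constant. Each new offset $\tilde b_{S,j}(x)=b_{S,j}(x)-\gamma\|A_{S,j}(x)\|$ is locally Lipschitz on $D$, since $b_{S,j}$ is locally Lipschitz and $A\mapsto\|A\|$ is $1$-Lipschitz (reverse triangle inequality), so its composition with the locally Lipschitz $A_{S,j}$ is locally Lipschitz; the offsets $b_{u,k}-\gamma\|A_{u,k}\|$ are constant. The condition $0<\gamma<\inf_{x\in D}R_C(K(x))$ guarantees that for each $x\in D$ there is an inscribed ball of radius exceeding $\gamma$, whose center $u_0$ satisfies $d_{K^c}(u_0)>\gamma$ and hence lies in $\textnormal{int}(K_\gamma(x))$; thus $\textnormal{int}(K_\gamma(x))\neq\emptyset$ on all of $D$, i.e. $D$ is contained in the analogue of $\Omega$ defined for $K_\gamma$.

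Third, with these facts in hand I would invoke the argument of Lemma~\ref{lem:KLips} verbatim, now applied to the data $(A_S,\tilde b_S)$ together with the constant constraints, to conclude that $\textnormal{int}(K_\gamma)$ is locally Lipschitz on $D$. Finally, since $K_\gamma(x)$ is a full-dimensional closed convex polytope it equals $\overline{\textnormal{int}(K_\gamma(x))}$, and the Hausdorff distance between bounded sets is invariant under taking closures, so $K_\gamma$ itself is locally Lipschitz on $D$.

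The main obstacle is the first step: justifying that the erosion is \emph{exactly} the intersection of the inward-shifted half-spaces. This is where the polytopal (hence convex, ``neck-free'') structure is essential — for general nonconvex sets the Minkowski difference by a ball can be discontinuous, and even for general convex bodies the support function of the inner parallel body is only the convex envelope of $\sigma_{K(x)}-\gamma\|\cdot\|$ rather than its literal value. The clean half-space description survives here precisely because we intersect the shifted facet half-spaces directly, and the only point requiring care is confirming $d_{K^c}(u)=\min_i(c_i(x)-a_i(x)^\T u)/\|a_i(x)\|$ for interior points, which relies on the nearest boundary point of a convex polytope lying on a facet hyperplane. The remaining steps are routine once Lemma~\ref{lem:KLips} is available.
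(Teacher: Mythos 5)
Your proof is correct, but it takes a genuinely different route from the paper's. The paper's own proof is a three-line set-algebra argument: starting from the Lipschitz inclusion $\textnormal{int}(K(y)) \subseteq \textnormal{int}(K(z)) + L\nrm{y-z}B(0,1)$, it ``takes a Minkowski difference from both sides'' by $\gamma B(0,1)$ and reads off $K_\gamma(y) \subseteq K_\gamma(z) + L\nrm{y-z}B(0,1)$ directly, retaining the same constant $L$ and never using the polyhedral structure of $K$; nonemptiness of $K_\gamma$ is handled exactly as you handle it, via $\gamma < \inf_{x \in D} R_C(K(x))$. You instead exploit the facet representation: the erosion is identified exactly as the inward-shifted polytope $\brc{u : a_i(x)^\T u \leq c_i(x) - \gamma \nrm{a_i(x)}}$, the shifted offsets are shown to remain locally Lipschitz, Lemma~\ref{lem:KLips} (via Proposition~\ref{prop:Freeman}) is re-applied to the shifted data, and taking closures transfers the property from $\textnormal{int}(K_\gamma)$ to $K_\gamma$. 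The comparison actually favors you on rigor: the paper's key step silently distributes the erosion across a Minkowski sum, i.e.\ it uses $(A + C) - \gamma B(0,1) \subseteq (A - \gamma B(0,1)) + C$, which is the \emph{reverse} of the inclusion that holds in general and can fail even for polytopes --- for a thin wedge $A$ with facet slope $s$, the erosion of the $\eps$-dilation reaches roughly $\eps\sqrt{1+s^2}/s$ closer to the apex, while the $\eps$-dilation of the erosion gains only $\eps$. So the paper's constant-preserving shortcut is unjustified as written, whereas your facet-shifting argument proves the lemma soundly, at the honest price of a longer proof and a Lipschitz constant that may exceed $L$ and degrade as $\inf_{x\in D} R_C(K(x)) - \gamma \to 0$. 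One small polish: the cleanest justification of your distance formula is to write $K(x)^c$ as the finite union of open half-spaces $\brc{v : a_i(x)^\T v > c_i(x)}$, so that $d_{K^c}(u)$ is the minimum of the distances to these half-spaces (with rows $a_i(x) = 0$ skipped, as they constrain neither $K$ nor its erosion); the remaining ingredients --- the Slater point supplied by the inradius bound and the fact that a closed convex set with nonempty interior is the closure of that interior --- you already verify.
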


\begin{proof}
    Since $\textnormal{int}(K)$ is nonempty and locally Lipschitz on $D$, then for all $x\in D$ there exists a domain $U(x)$ and constant $L = L(x)$ such that $\textnormal{int}(K(y)) \subseteq \textnormal{int}(K(z)) + L\nrm{y-z}B(0,1)$. Taking a Minkowski difference from both sides yields
    \begin{align*}
        \textnormal{int}(K(y)) - \gamma B(0,1) &\subseteq \textnormal{int}(K(z)) - \gamma B(0,1) + \\ 
        & \hspace{3em} L\nrm{y-z} B(0,1) \\
        K_{\gamma}(y) &\subseteq K_{\gamma}(z) + L\nrm{y-z} B(0,1).
    \end{align*}
    The result follows. Note that since $R_C(x) > \gamma$ for all $x \in D \subset \Omega$, $K_\gamma(x)$ is never empty in $D$.
\end{proof}


\noindent The feasible set $K_{\gamma}(x)$ can be used to construct the following set-valued map $G_{\gamma} : \R^n \rarr \mathcal{P}(\R^n)$:
\begin{align}
\label{eq:Geex}
    &G_{\gamma}(x) = \brc{v \in \R^n :  v = f(x) + g(x) u,\  u \in K_{\gamma}(x)}.
\end{align}

\noindent We next prove several properties about the set-valued map $G_{\gamma}$ in order to show that it satisfies the conditions of Theorem \ref{thm:Clarkestrong}.
First, we show that $G_\gamma$ is locally Lipschitz on any open, bounded subset of $\Omega$.

\begin{theorem}
\label{thm:One}
Let $D \subset \Omega$ be a bounded, open set.
Then the set-valued map $G_{\gamma}$ from \eqref{eq:Geex} is locally Lipschitz on $D$.
\end{theorem}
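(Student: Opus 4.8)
The plan is to verify the defining inclusion of a locally Lipschitz set-valued map directly: for each $x \in D$ I will exhibit a neighborhood $U(x) \subseteq D$ and a constant $L$ such that $G_{\gamma}(y) \subseteq G_{\gamma}(z) + L\nrm{y-z}B(0,1)$ for all $y,z \in U(x)$, the reverse inclusion following by symmetry in $y,z$. The two ingredients are Lemma \ref{lem:kgamma}, which gives that $K_{\gamma}$ is locally Lipschitz on $D$, together with the local Lipschitz continuity of $f$ and $g$ on $\R^n$ assumed in \eqref{eq:controlaffine}. Since $D$ is open and bounded, for each $x$ the neighborhood $U(x)$ can be taken inside $D$ with compact closure $\overline{U(x)}$, which is what makes the uniform bounds below available.

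The core of the argument is a pointwise matching estimate. Fix $x \in D$ and choose $U(x) \subseteq D$ small enough that the Lipschitz estimates for $f$, $g$, and $K_{\gamma}$ all hold on it simultaneously (i.e. intersect the three Lipschitz neighborhoods). Take any $v \in G_{\gamma}(y)$, written $v = f(y) + g(y)u$ with $u \in K_{\gamma}(y)$. By the local Lipschitz property of $K_{\gamma}$ there exists $u' \in K_{\gamma}(z)$ with $\nrm{u-u'} \le L_K\nrm{y-z}$; set $v' = f(z) + g(z)u' \in G_{\gamma}(z)$. Adding and subtracting $g(y)u'$ and using the triangle inequality and submultiplicativity of $\nrm{\cdot}$ gives
\begin{align*}
\nrm{v-v'} &\le \nrm{f(y)-f(z)} + \nrm{g(y)}\,\nrm{u-u'} \\
&\quad + \nrm{g(y)-g(z)}\,\nrm{u'}.
\end{align*}

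It remains to bound each factor uniformly on $U(x)$. The terms $\nrm{f(y)-f(z)}$ and $\nrm{g(y)-g(z)}$ are bounded by $L_f\nrm{y-z}$ and $L_g\nrm{y-z}$ by local Lipschitzness of $f$ and $g$; $\nrm{g(y)}$ is bounded by a constant $M_g$ since $g$ is continuous and $\overline{U(x)}$ is compact; and $\nrm{u'} \le M_{\mathcal{U}}$ because $u' \in K_{\gamma}(z) \subseteq \mathcal{U}$ with $\mathcal{U}$ compact (Assumption \ref{assume:U}). Collecting these, $\nrm{v-v'} \le (L_f + M_g L_K + L_g M_{\mathcal{U}})\nrm{y-z}$, so choosing $L$ strictly larger than this constant yields $v \in G_{\gamma}(z) + L\nrm{y-z}B(0,1)$ in the open-ball sense (the case $y=z$ being trivial). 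Since $v \in G_{\gamma}(y)$ was arbitrary, the desired inclusion holds, and $G_{\gamma}$ is locally Lipschitz on $D$.

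I expect the only real subtlety to be the mixed product term $g(x)u$, in which both the matrix $g(x)$ and the feasible set $K_{\gamma}(x)$ parametrizing $u$ vary with $x$ at once; the add-and-subtract step together with the uniform bounds $M_g$ and $M_{\mathcal{U}}$ is precisely what decouples this joint dependence into separate Lipschitz contributions. Everything else is bookkeeping over the finitely many Lipschitz neighborhoods, legitimate because $\overline{U(x)}$ is compact and $D$ is open.
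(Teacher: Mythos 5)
Your proof is correct, and it reaches the theorem by a noticeably more elementary route than the paper. The paper works at the level of set algebra: it writes $G_{\gamma}(x_2) = f(x_2) + g(x_2)K_{\gamma}(x_2)$ and expands \emph{both} $g$ and $K_{\gamma}$ around $x_1$ as Minkowski perturbations, which produces four error terms --- including a quadratic cross term $L_2 L_3 \nrm{x_{12}}^2\, \bar{B}^{n\times m}\bar{B}^{m\times 1}$ --- and then collapses set products such as $\bar{B}^{n \times m} K_{\gamma}(x_1)$ into norm-scaled balls using the auxiliary Lemmas~\ref{lem:setmult} and~\ref{lem:setmatrixprod} from the Appendix. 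You instead argue element-wise: match each $v = f(y) + g(y)u$ to $v' = f(z) + g(z)u'$, with $u'$ supplied by the Lipschitz inclusion for $K_{\gamma}$, and add and subtract $g(y)u'$. This decomposition yields only three terms, avoids the cross term entirely (and hence the $\nrm{x^*_{12}}$ factor appearing in the paper's Lipschitz constant), and replaces the set-product lemmas by ordinary submultiplicativity of the norm; likewise your bound $\nrm{u'} \leq M_{\mathcal{U}}$ via compactness of $\mathcal{U}$ (Assumption~\ref{assume:U}) replaces the paper's $\nrm{v_{\max}}$ taken over $K_{\gamma}(x_1)$. Both proofs rest on the same two pillars --- local Lipschitzness of $f$, $g$ and of $K_{\gamma}$ --- so the substance is shared, but your implementation is self-contained and gives a cleaner constant. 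One bookkeeping remark: Lemma~\ref{lem:kgamma} is conditional on $\textnormal{int}(K)$ being locally Lipschitz, so strictly you should first invoke Lemma~\ref{lem:KLips} (as the paper's proof does), noting that its hypothesis holds because $A_S$ and $b_S$ are locally Lipschitz, which follows from $f$, $g$ being locally Lipschitz and $h \in \mathcal{C}^{1,1}_{loc}$.
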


\begin{proof}
From Lemma \ref{lem:kgamma}, we have that the local Lipschitzness of $K$ on $D$ implies local Lipschitzness of $K_\gamma$ on $D$. 
Recall that the functions $A_S, b_S$ are locally Lipschitz continuous since $f, g$ are locally Lipschitz continuous, and $h\in C^{1,1}_{loc}$. Thus, from Lemma \ref{lem:KLips}
the set-valued map $K$ is locally Lipschitz.
These and the preceding arguments imply that $K$ and therefore $K_{\gamma}$ are locally Lipschitz on $D \subset \Omega$.

Per \eqref{eq:Geex}, the set-valued mapping $G_{\gamma}(x)$ is equal to the image of $K_{\gamma}(x)$ under the affine mapping $f(x) + g(x) u$, $u \in K_{\gamma}(x)$. 
We must next show that for all $x \in \R^n$ there exists a neighborhood $U(x)$ and constant $L = L(x)$ such that $G_{\gamma}(x_2) \subseteq G_{\gamma}(x_1) + L \nrm{x_{12}} \bar{B}^{n\times 1}(0,1)$ for all $x_1,x_2 \in U(x)$, where $x_{12} = x_1 - x_2$.
For brevity, we abbreviate $\bar{B}^{n \times m} = \bar{B}^{n \times m}(0,1)$.
Since $f$, $g$, and $K_{\gamma}$ are locally Lipschitz on $D$, it holds that there exist neighborhoods $U_1(x)$, $U_2(x)$, $U_3(x)\subset \mathbb R^n$ of $x$ and constants $L_1 = L_1(x)$, $L_2 = L_2(x)$, $L_3 = L_3(x)$ such that
\begin{align*}
    \{f(x_2)\} &\subset f(x_1) + L_1\nrm{x_{12}}\bar{B}^{n\times 1}\, \quad\forall x_1,x_2 \in U_1(x), \\
    \{g(x_2) \} &\subset g(x_1) + L_2 \nrm{x_{12}} \bar{B}^{n\times m}\, \quad \forall x_1,x_2 \in U_2(x), \\
    K_{\gamma}(x_2) &\subseteq K_{\gamma}(x_1) + L_3 \nrm{x_{12}} \bar{B}^{m \times 1}\, \quad \forall x_1, x_2 \in U_3(x).
\end{align*}
By \eqref{eq:Geex} and from the local Lipschitzness of $K_{\gamma}$ we have the following:\small{
\begin{align}
    \hspace{-5pt} G_{\gamma}(x_2) =& f(x_2) + g(x_2) K_{\gamma}(x_2) \nonumber \\
    \subseteq & (f(x_1) + L_1 \nrm{x_{12}} \bar{B}^{n\times 1}) + \nonumber\\
    &  (g(x_1) + L_2 \nrm{x_{12}} \bar{B}^{n \times m})(K_{\gamma}(x_1) + L_3\nrm{x_{12}}\bar{B}^{m \times 1}) \nonumber\\
    \subseteq & (f(x_1) + g(x_1) K_{\gamma}(x_1)) + \nrm{x_{12}} \Big[ L_1 \bar{B}^{n \times 1} \nonumber\\
    & + L_2 \bar{B}^{n \times m} K_{\gamma}(x_1) + L_3 g(x_1) \bar{B}^{m \times 1} \nonumber\\ 
    & + L_2 L_3 \nrm{x_{12}} \bar{B}^{n \times m} \bar{B}^{m \times 1} \Big] \label{eq:reallylong}
\end{align}}\normalsize
By Lemmas \ref{lem:setmult}, \ref{lem:setmatrixprod} in the Appendix, it holds that
\begin{align}
    \bar{B}^{n \times m} K_{\gamma}(x_1) &\subseteq \nrm{v_{\max}} \bar{B}^{n \times 1}, 
\end{align}
where $v_{\max} \triangleq \underset{v \in K_{\gamma}(x_1)}{\arg\sup} \nrm{v}$ exists and is finite since $K_{\gamma}(x_1)$ is compact. Using  Lemmas \ref{lem:setmult}, \ref{lem:setmatrixprod} also yields
\begin{align}
    g(x_1) \bar{B}^{m \times 1} &\subseteq \nrm{g(x_1)} \bar{B}^{n \times 1} \\
    \bar{B}^{n \times m} \bar{B}^{m \times 1} & \subseteq \bar{B}^{n \times 1}
\end{align}
Note that since $g$ is locally Lipschitz on $\R^n$, $\nrm{g(x)}$ is locally bounded\footnote{The concept of local boundedness is described in Section \ref{sec:notation}.}
on $\R^n$. Using these results, \eqref{eq:reallylong} can be simplified to yield
\begin{align}
    G_{\gamma}(x_2) &\subseteq f(x_1) + g(x_1) K_{\gamma}(x_1)  + L \nrm{x_{12}} \bar{B}^{n \times 1} \\
    &\subseteq G_{\gamma}(x_1) + L \nrm{x_{12}} \bar{B}^{n \times 1},
\end{align}
where $L = L_1 + L_2 \nrm{v_{\max}} + L_3 \nrm{g(x_1)} + L_2 L_3 \nrm{x^*_{12}}$,  $x^*_{12} = \underset{x_1,x_2 \in U(x)}{\arg\sup} \nrm{x_{12}}$ and $U(x)= U_1(x) \cap U_2(x) \cap U_3(x)$. We can therefore conclude that $G_{\gamma}$ is locally Lipschitz on $D$.
\end{proof}

\noindent Our next result demonstrates that $G_\gamma$ satisfies the Standing Hypotheses from Definition \ref{def:standinghyp}.

\begin{lemma}
\label{lem:Two}
The set-valued map $G_{\gamma}$ from \eqref{eq:Geex} satisfies the Standing Hypotheses from Definition \ref{def:standinghyp} (see Appendix) for all $x$ in any bounded, open domain $D \subset \Omega$.
\end{lemma}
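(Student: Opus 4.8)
The plan is to verify the three conditions of the Standing Hypotheses (Definition \ref{def:standinghyp}) one at a time, leaning on the local Lipschitzness of $G_{\gamma}$ already established in Theorem \ref{thm:One}. The key structural observation I would use throughout is that $G_{\gamma}(x)$ is the image of $K_{\gamma}(x)$ under the affine map $u \mapsto f(x) + g(x)u$, so the topological properties of $G_{\gamma}(x)$ can be inherited from those of $K_{\gamma}(x)$ together with continuity and linearity of this map.

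First I would establish condition (a), that $G_{\gamma}(x)$ is nonempty, compact, and convex for each $x \in D$. The starting point is $K_{\gamma}(x)$. By the construction in \eqref{eq:Kgamma} and the choice $0 < \gamma < \inf_{x \in D} R_C(K(x))$, the set $K_{\gamma}(x)$ is nonempty for every $x \in D$ (as already noted after Lemma \ref{lem:kgamma}). It is closed by construction and bounded since $K_{\gamma}(x) \subseteq K(x) \subseteq \mathcal{U}$ with $\mathcal{U}$ compact by Assumption \ref{assume:U}; hence $K_{\gamma}(x)$ is compact. Convexity of $K_{\gamma}(x)$ follows because $K(x)$ is a convex polytope and the Minkowski difference of a convex set with a ball is convex. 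Since $u \mapsto f(x)+g(x)u$ is continuous and affine in $u$, $G_{\gamma}(x)$ is then the continuous image of a compact set (hence compact), the image of a nonempty set (hence nonempty), and the affine image of a convex set (hence convex). This settles (a).

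Next, for condition (b), upper semicontinuity, I would invoke the local Lipschitz estimate from Theorem \ref{thm:One}: for each $x \in D$ there is a neighborhood $U(x)$ and a constant $L$ with $G_{\gamma}(y) \subseteq G_{\gamma}(x) + L\nrm{y - x}\,\bar{B}^{n\times 1}(0,1)$ for all $y \in U(x)$. Given any open set $V \supseteq G_{\gamma}(x)$, the compactness of $G_{\gamma}(x)$ established in (a) guarantees a strictly positive distance between $G_{\gamma}(x)$ and $V^c$; choosing $y$ close enough to $x$ makes $L\nrm{y-x}$ smaller than this distance, so that $G_{\gamma}(y) \subseteq V$, which is precisely upper semicontinuity. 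Finally, for condition (c), local boundedness, for each $x \in D$ I would pick a bounded neighborhood on which the continuous functions $f$ and $g$ are bounded, say $\nrm{f(y)} \leq M_f$ and $\nrm{g(y)} \leq M_g$; this is possible since $f,g$ are locally Lipschitz, hence continuous. Every $v \in G_{\gamma}(y)$ has the form $v = f(y) + g(y)u$ with $u \in K_{\gamma}(y) \subseteq \mathcal{U}$, and $\nrm{u} \leq M_u \triangleq \sup_{w \in \mathcal{U}} \nrm{w} < \infty$ by compactness of $\mathcal{U}$. Submultiplicativity of the norm then gives $\nrm{v} \leq M_f + M_g M_u$ uniformly on the neighborhood, which is local boundedness.

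I expect the only subtle point to be the passage from the Hausdorff-type Lipschitz estimate to upper semicontinuity in (b): it genuinely requires the compactness of $G_{\gamma}(x)$ from (a) in order to obtain a strictly positive separation of $G_{\gamma}(x)$ from the complement of an arbitrary open neighborhood, rather than a merely nonnegative one. Everything else is a direct transfer of polytope and compactness properties through the affine map, and the local boundedness claim reduces to a one-line norm estimate.
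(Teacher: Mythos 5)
Your proposal is correct and follows the same overall structure as the paper's proof: establish nonemptiness, compactness, and convexity of $K_{\gamma}(x)$ (nonemptiness from the choice of $\gamma$, boundedness from $K_{\gamma}(x) \subseteq \mathcal{U}$, closedness and convexity from the construction in \eqref{eq:Kgamma}), transfer these to $G_{\gamma}(x)$ through the affine map $u \mapsto f(x)+g(x)u$, and obtain the remaining two hypotheses from the local Lipschitzness established in Theorem \ref{thm:One}. There are two minor deviations in execution, both harmless. For upper semicontinuity, the paper simply cites the standard fact that a locally Lipschitz set-valued map is upper semicontinuous (the remark after Definition \ref{def:locallyLip}, attributed to the discontinuous-dynamics literature), whereas you prove it directly via the positive separation between the compact set $G_{\gamma}(x)$ and the complement of an open neighborhood; your observation that compactness is what makes this separation strictly positive is exactly the content of the cited fact. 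For local boundedness, the paper again leans on the Lipschitz estimate, taking $m = \sup_{y \in G_{\gamma}(x)}\nrm{y} + \epsilon L(x)$ on a ball inside the Lipschitz neighborhood, while you give a direct bound $\nrm{v} \leq M_f + M_g M_u$ using only continuity of $f,g$ and compactness of $\mathcal{U}$. Your version of this step is slightly more elementary and does not depend on Theorem \ref{thm:One} at all, which makes the logical dependence a bit cleaner, but both arguments are valid and the proofs are otherwise the same.
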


\begin{proof}
First, recall that $G_{\gamma}(x)$ is equal to the image of $K_{\gamma}(x)$ under the affine mapping $f(x) + g(x) u$. 
By definition of $\gamma$, i.e. $0 < \gamma < \inf_{x \in D} R_C(x)$, $K_{\gamma}$ is nonempty on $D$.
Note that $K_\gamma(x) \subset K(x) \subseteq \mathcal{U}$, implying that $K_\gamma(x)$ is bounded for all $x$.
By convexity of $K(x)$ and the definition of $K_\gamma(x)$ we have that $K_{\gamma}(x)$ is closed and convex.
Since $K_{\gamma}(x)$ is therefore convex, compact, and nonempty for all $x \in \Omega$, $G_{\gamma}(x)$ is therefore also convex, compact, and nonempty for all $x \in \Omega$ \cite[Sec. 2.3.2]{boyd2004convex}. Since $G_{\gamma}$ is locally  Lipschitz on $ \Omega$ by Theorem \ref{thm:One}, $G_{\gamma}(x)$ is therefore upper semicontinuous for all $x \in \Omega$.
Finally, $G_{\gamma}$ being locally Lipschitz on $\Omega$ implies that $G_{\gamma}(x)$ is locally bounded for all $x \in \Omega$.
This can be seen by choosing $\epsilon$ such that $B(x,\epsilon) \subseteq U(x)$, where $U(x)$ is the Lipschitz neighborhood for $G_{\gamma}(x)$, and choosing $m = \sup_{y \in G_{\gamma}(x)}\nrm{y}
+ \epsilon L(x)$ where $L(x)$ is the Lipschitz constant for $G_{\gamma}(x)$ at $x$. Note that $\sup_{y \in G_{\gamma}(x)}\nrm{y}$ is finite for all $x \in \Omega$ since $G_{\gamma}(x)$ is compact for all $x \in \Omega$.
\end{proof}

\begin{lemma}
\label{lem:Three}
The set-valued map $G_{\gamma}$ from \eqref{eq:Geex} satisfies $G_{\gamma}(x) \subseteq T_S(x)$ for all $x \in \R^n$.
\end{lemma}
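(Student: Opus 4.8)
The plan is to exploit the defining constraint of $K(x)$ directly. For any $v \in G_{\gamma}(x)$ there is, by \eqref{eq:Geex}, a control $u \in K_{\gamma}(x) \subseteq K(x)$ with $v = f(x) + g(x)u$, and the first block of the inequality defining $K(x)$ in \eqref{eq:Kforreal} reads $A_S(x) u \leq b_S(x)$, i.e. $L_gh(x) u \leq -\alpha(h(x)) - L_fh(x)$. Since $\ip{\nabla h(x), v} = L_fh(x) + L_gh(x) u$, this yields the pointwise estimate
\begin{align}
\label{eq:keytangent}
\ip{\nabla h(x), v} \leq -\alpha(h(x)) \qquad \forall v \in G_{\gamma}(x).
\end{align}
The proof then proceeds by cases on the sign of $h(x)$, which is exactly the quantity that governs the structure of $T_S(x)$.

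If $G_{\gamma}(x) = \emptyset$ the inclusion is vacuous, so assume $G_{\gamma}(x) \neq \emptyset$. When $h(x) < 0$, the point $x$ lies in $\textnormal{int}(S)$, and the Clarke tangent cone of $S$ at an interior point is all of $\R^n$; hence $G_{\gamma}(x) \subseteq \R^n = T_S(x)$ trivially. When $h(x) > 0$, $x$ lies in the open complement of the closed set $S$, so no sequence of points of $S$ converges to $x$ and $T_S(x) = \R^n$ by the (vacuous) sequential definition of the tangent cone; again the inclusion is immediate. Thus only the boundary case $x \in \partial S$, where $h(x) = 0$, carries real content.

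On $\partial S$ I first claim $\nabla h(x) \neq 0$. Indeed, if $\nabla h(x) = 0$ then $L_fh(x) = 0$ and $L_gh(x) = 0$, so $\inf_{u \in \mathcal{U}}\bkt{L_fh(x) + L_gh(x) u} = 0$, contradicting the strict CBF condition \eqref{eq:necessary} of Assumption \ref{assume:necessary}. With $\nabla h(x) \neq 0$ and $h \in \mathcal{C}^{1,1}_{loc}$, the set $S = \{h \leq 0\}$ is, near $x$, a $C^1$ manifold-with-boundary and is tangentially regular, so its Clarke tangent cone coincides with the linearized half-space
\begin{align}
\label{eq:halfspace}
T_S(x) = \{w \in \R^n : \ip{\nabla h(x), w} \leq 0\}.
\end{align}
Since $\alpha$ is an extended class-$\mathcal{K}_\infty$ function we have $\alpha(h(x)) = \alpha(0) = 0$, so \eqref{eq:keytangent} reduces to $\ip{\nabla h(x), v} \leq 0$ for every $v \in G_{\gamma}(x)$. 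Hence every such $v$ belongs to the half-space \eqref{eq:halfspace}, i.e. $G_{\gamma}(x) \subseteq T_S(x)$, which settles the boundary case and completes the proof.

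The main obstacle is this boundary step: justifying that the Clarke tangent cone of the $C^1$ sublevel set equals the linearized half-space \eqref{eq:halfspace}. This is precisely where the regularity of $h$ and the nonvanishing of $\nabla h(x)$ (supplied by the strict CBF property) are essential, since without a constraint qualification the Clarke tangent cone can be strictly smaller than the linearized cone. I would invoke the standard characterization for sublevel sets of $C^1$ functions with $\nabla h(x) \neq 0$ (equivalently, the Mangasarian--Fromovitz condition for a single active constraint), presumably recorded through Definition \ref{def:tangentcone} or an accompanying result in the Appendix. A secondary point worth stating explicitly is the convention for $T_S(x)$ at points $x \notin S$, which I treat via the vacuous quantification so that the claim holds for all $x \in \R^n$ as stated, even though Theorem \ref{thm:Clarkestrong} only requires it on $S$.
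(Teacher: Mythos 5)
Your treatment of the cases that actually matter coincides with the paper's own proof: the same key inequality $\ip{\nabla h(x), v} \leq -\alpha(h(x))$ extracted from \eqref{eq:Kforreal}, the vacuous case when $G_{\gamma}(x)=\emptyset$, the interior case $T_S(x)=\R^n$, and the boundary case via the half-space characterization $T_S(x) = \brc{w \in \R^n : \ip{\nabla h(x), w}\leq 0}$ (which the paper supplies as Lemma \ref{lem:cones} in the Appendix) combined with $\alpha(0)=0$. Your explicit verification that $\nabla h(x)\neq 0$ on $\partial S$, deduced from the strict CBF inequality \eqref{eq:necessary}, is a detail the paper leaves implicit even though the manifold argument in the proof of Lemma \ref{lem:cones} genuinely needs it; that is a small improvement.

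The gap is your exterior case $h(x)>0$. You appeal to a sequential characterization of the Clarke tangent cone that is vacuous at points outside $S$ and conclude $T_S(x)=\R^n$ there. But the paper's Definition \ref{def:tangentcone} defines $T_S(x)$ through difference quotients of the distance function $d_S$, and that definition is \emph{not} vacuous outside $S$: for $S$ the closed unit disk in $\R^2$ and $x=(2,0)$ one has $d_S(y)=\nrm{y}-1$ near $x$, hence $T_S(x)=\brc{v \in \R^2: v_1\leq 0}\neq\R^2$. The sequential characterization is equivalent to Definition \ref{def:tangentcone} only at points of $S$, so this step is invalid as written. Nor can it be repaired: the values of $h$ outside $S$ are unrelated to $d_S$ (replacing $h$ by $\phi\, h$ with $\phi>0$ leaves $S$, the strict CBF property, and Assumptions \ref{assume:U}--\ref{assume:compactS} intact but rotates $\nabla h$ at exterior points), and one can arrange data with a point $x\notin S$, $x\in\Omega$, and some $v\in G_{\gamma}(x)$ satisfying $\ip{\nabla h(x),v}\leq -\alpha(h(x))$ yet $v\notin T_S(x)$. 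So the inclusion claimed ``for all $x\in\R^n$'' genuinely fails at exterior points in general. In fairness, the paper's own proof has the same blind spot --- it treats only $\R^n\setminus\Omega$, $\textnormal{int}(S)\cap\Omega$, and $\partial S\cap\Omega$, silently omitting $(\R^n\setminus S)\cap\Omega$ --- and only the inclusion for $x\in S$ is ever used downstream (Theorem \ref{thm:main1} via condition (1) of Theorem \ref{thm:Clarkestrong}). Restricted to $x\in S$ together with the points where $G_{\gamma}(x)=\emptyset$, your proof is correct and is essentially the paper's.
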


\begin{proof}
    First, observe that for all $x \in \R^n \backslash \Omega$ we trivially have $G_{\gamma}(x) = \emptyset \subset T_S(x)$.
    Next, observe that for all $x \in \textnormal{int}(S) \cap \Omega$, $T_S(x) = \R^n$ implying $G_{\gamma}(x) \subset T_S(x)$. 
    We now focus on the set $\partial S \cap \Omega$. Since $h$ is continuously differentiable, by Lemma \ref{lem:cones} we have that $T_S(x) = \brc{v \in \R^n : \ip{v,\frac{\partial h(x)}{\partial x}} \leq 0}$ for all $x \in \partial S$. Observe that by the definition of $G_{\gamma}$, every $v \in G_{\gamma}(x)$ satisfies $\ip{\frac{\partial h(x)}{\partial x}, v} \leq 0$, which implies $G_{\gamma}(x) \subseteq T_S(x)$.
\end{proof}

Using Theorem \ref{thm:One} and Lemmas \ref{lem:Two}-\ref{lem:Three}, we can now prove the first main result of the paper that concerns the invariance of the set $S$ for the closed-loop trajectories of \eqref{eq:controlaffine}.

\begin{theorem}
\label{thm:main1}
    Consider the system 
    \begin{align} 
    \label{eq:theoremsystem}
    \dot{x}(t) \in G_{\gamma}(x(t)).
    \end{align}
    Let $S$ be a set defined as in \eqref{eq:lotsofsets} for some strict control barrier function $h$.
    Let $x(\cdot)$ be any trajectory of \eqref{eq:theoremsystem} under a Lebesgue measurable control input $u(\cdot)$ with $x_0 = x(0) \in \textnormal{int}(S \cap \Omega)$. Let $[0,T(x_0))$ be the (possibly empty) maximal interval such that $x(t) \in \textnormal{int}(\Omega)$ for all $t \in [0,T(x_0))$. Then $x(t) \in S$ for all $t \in [0,T(x_0))$.
\end{theorem}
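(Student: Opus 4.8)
The plan is to reduce the claim to a statement on a fixed compact time interval, on which $G_{\gamma}$ enjoys all the regularity established above, and then to invoke Theorem~\ref{thm:Clarkestrong}. Fix an arbitrary $\tau \in [0,T(x_0))$; since $\tau$ is arbitrary it suffices to show $x(t) \in S$ for all $t \in [0,\tau]$. Because $x(\cdot)$ is absolutely continuous and $x(t) \in \textnormal{int}(\Omega)$ for every $t \in [0,\tau] \subset [0,T(x_0))$, the image $\mathcal{K} \triangleq x([0,\tau])$ is a compact subset of the open set $\textnormal{int}(\Omega)$. I would then choose a bounded, open set $D$ with $\mathcal{K} \subset D$ and $\overline{D} \subset \textnormal{int}(\Omega)$, which exists because a compact set contained in an open set always admits such a neighborhood (for instance a finite union of sufficiently small balls).

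On $D$ all the hypotheses of Theorem~\ref{thm:Clarkestrong} are in force: by Theorem~\ref{thm:One} the map $G_{\gamma}$ is locally Lipschitz on $D$, by Lemma~\ref{lem:Two} it satisfies the Standing Hypotheses of Definition~\ref{def:standinghyp} on $D$, and by Lemma~\ref{lem:Three} it satisfies $G_{\gamma}(x) \subseteq T_S(x)$ for every $x$. Since the trajectory segment $x|_{[0,\tau]}$ is a solution of $\dot{x} \in G_{\gamma}(x)$ that never leaves $\overline{D}$, the intended conclusion is that the tangent-cone condition $G_{\gamma}(x) \subseteq T_S(x)$, applied along this segment, forces the trajectory starting from $x_0 \in \textnormal{int}(S \cap \Omega) \subseteq S$ to remain in the closed set $S$ for all $t \in [0,\tau]$.

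The main obstacle is that Theorem~\ref{thm:Clarkestrong} is a \emph{global} statement, whereas $G_{\gamma}$ only satisfies its hypotheses on bounded open subsets of $\Omega$; indeed $G_{\gamma}(x) = \emptyset$ for $x \notin \Omega$, which is precisely why the claim is restricted to the interval $[0,T(x_0))$ on which the trajectory stays in $\textnormal{int}(\Omega)$. To make the application rigorous I would localize by extending $G_{\gamma}|_{\overline{D}}$ to a globally defined set-valued map $\widetilde{G}$ on $\R^n$ that agrees with $G_{\gamma}$ on $\overline{D}$ and preserves, everywhere on $\R^n$, the properties of being locally Lipschitz and nonempty-, compact-, and convex-valued as well as locally bounded. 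Because $x|_{[0,\tau]}$ remains in $\overline{D}$, it is simultaneously a trajectory of $\dot{x} \in \widetilde{G}(x)$, and the distance-function estimate underlying Theorem~\ref{thm:Clarkestrong}, namely the monotonicity of $t \mapsto d_S(x(t))$ obtained via a Gronwall argument driven by the local Lipschitz constant of the inclusion together with $\widetilde{G}(x) \subseteq T_S(x)$ on $S \cap \overline{D}$, depends only on the behavior of the inclusion along the trajectory inside $D$.

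This localized estimate yields $d_S(x(t)) \le d_S(x_0)\,e^{Kt} = 0$ for all $t \in [0,\tau]$, where $K$ is the local Lipschitz constant of $\widetilde{G}$ on $\overline{D}$ and $d_S(x_0)=0$ since $x_0 \in S$. As $S$ is closed, $d_S(x(t)) = 0$ gives $x(t) \in S$ for every $t \in [0,\tau]$, and since $\tau \in [0,T(x_0))$ was arbitrary the conclusion $x(t) \in S$ for all $t \in [0,T(x_0))$ follows. The only genuinely delicate point, beyond the routine enclosure of the trajectory in $D$, is justifying the extension $\widetilde{G}$ and confirming that the invariance argument of Theorem~\ref{thm:Clarkestrong} is local along the trajectory; everything else is a direct assembly of Theorem~\ref{thm:One} and Lemmas~\ref{lem:Two}--\ref{lem:Three}.
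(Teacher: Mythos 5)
Your proposal is correct, and its skeleton is the same as the paper's: assemble Theorem~\ref{thm:One}, Lemma~\ref{lem:Two}, and Lemma~\ref{lem:Three}, then invoke Theorem~\ref{thm:Clarkestrong}. Where you genuinely diverge is in handling the fact that $G_{\gamma}$ satisfies the hypotheses of Theorem~\ref{thm:Clarkestrong} only on bounded open subsets of $\Omega$ (indeed $G_{\gamma}(x) = \emptyset$ off $\Omega$, so the Standing Hypotheses fail globally). The paper's proof disposes of this in a single sentence, asserting that by Theorem~\ref{thm:Clarkestrong} ``the trajectory will remain in $S$ as long as $x(t) \in \textnormal{int}(\Omega)$'' --- that is, it applies a globally stated invariance theorem in a localized fashion without justification. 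Your proof makes precisely this step explicit: fix $\tau < T(x_0)$, use compactness of $x([0,\tau]) \subset \textnormal{int}(\Omega)$ to enclose the segment in a bounded open $D$ with $\overline{D} \subset \textnormal{int}(\Omega)$, and then run the distance-function/Gronwall estimate $d_S(x(t)) \leq d_S(x_0)e^{Kt}$ that underlies Theorem~\ref{thm:Clarkestrong} using only data on $D$. One refinement worth making: the auxiliary extension $\widetilde{G}$ is both the weakest link and unnecessary. As stated it carries a hidden obligation --- the Gronwall argument needs the tangency condition at the projection points $s \in \prj_S(x(t))$, and an arbitrary extension need not satisfy $\widetilde{G}(s) \subseteq T_S(s)$ if such an $s$ falls outside $\overline{D}$. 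It is cleaner to skip the extension entirely: since $d_S(x(0)) = 0$ and the estimate is run on a maximal interval where $d_S(x(t))$ stays below $\mathrm{dist}(x([0,\tau]), \partial D)$, the projections automatically lie in $D$, where Lemma~\ref{lem:Three} and the local Lipschitz property of $G_{\gamma}$ (Theorem~\ref{thm:One}) supply exactly what the differential inequality needs. In short, your route buys rigor at the one point where the paper's proof is glib, at the cost of length; with the extension step replaced by the direct localized estimate, it would be a strict improvement on the published argument.
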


\begin{proof}
Recall that Assumption \ref{assume:necessary} implies that $\partial S \subset \Omega$ and that $\textnormal{int}(S \cap \Omega) \neq \emptyset$.
By Theorem \ref{thm:One} and Lemma \ref{lem:Two}, $G_{\gamma}$ satisfies the Standing Hypotheses from Definition \ref{def:standinghyp} and is locally Lipschitz on $\textnormal{int}(S \cap \Omega)$, which guarantees existence of solutions to \eqref{eq:theoremsystem}.
By Lemma \ref{lem:Three}, we have $G_{\gamma}(x) \subseteq T_S(x)$ for all $x \in \R^n$.
By Theorem \ref{thm:Clarkestrong}, the trajectory $x(t)$ will remain in $S$ as long as $x(t) \in \textnormal{int}(\Omega)$; therefore $x(t) \in S$ for all $t \in [0,T(x_0))$.
\end{proof}

Theorem \ref{thm:main1} considers the general case where $(S \setminus \Omega) \neq \emptyset$; i.e. there may exist interior points of $S$ which are not in $\Omega$.\footnote{Recall that under Assumption \ref{assume:necessary} it holds that $\partial S \subset \Omega$ and $\text{int}(S \cap \Omega) \neq \emptyset$.} Since the set-valued mapping $G_{\gamma}$ satisfies the conditions of Theorem \ref{thm:Clarkestrong} only on bounded, open subsets of $\Omega$, strong invariance cannot be guaranteed by Theorem \ref{thm:Clarkestrong} for any trajectory which leaves $\Omega$. However in the case that $S \subseteq \Omega$, i.e. the interior of $K(x)$ is non-empty for all $x\in S$, the following corollary shows that the system pair $(S,G)$ is strongly invariant. 

\begin{corr}
\label{cor:main1}
    Under the hypotheses of Theorem \ref{thm:main1}, suppose there exists a bounded, open domain $D \subseteq \Omega$ such that $S \subset D$. If $x_0 = x(0) \in S$ then any Lebesgue measurable control input $u(t) \in K_{\gamma}(x(t))$ renders the pair $(S,G_{\gamma})$ strongly invariant; i.e. $x(t) \in S$ for all $t \geq 0$.
\end{corr}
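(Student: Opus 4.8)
The plan is to exploit the fact that the new hypothesis $S \subset D \subseteq \Omega$ with $D$ open removes the ``exit from $\Omega$'' caveat present in Theorem \ref{thm:main1}, so that Theorem \ref{thm:Clarkestrong} can be applied directly on a neighborhood of $S$. First I would record the elementary but crucial observation that, since $D$ is open and $D \subseteq \Omega$, we have $D \subseteq \textnormal{int}(\Omega)$, and therefore $S \subset \textnormal{int}(\Omega)$; in particular $\partial S \subset \textnormal{int}(\Omega)$. By Theorem \ref{thm:One}, Lemma \ref{lem:Two}, and Lemma \ref{lem:Three}, the map $G_{\gamma}$ is locally Lipschitz on $D$, satisfies the Standing Hypotheses of Definition \ref{def:standinghyp} on $D$, and obeys $G_{\gamma}(x) \subseteq T_S(x)$ for every $x \in \R^n$, hence in particular for every $x \in S$. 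These are exactly condition (1) together with the local-Lipschitz and Standing-Hypotheses requirements of Theorem \ref{thm:Clarkestrong}, now valid on all of the neighborhood $D$ of the compact set $S$.

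Next I would use these facts to rule out any finite-time exit from $S$. Suppose, for contradiction, that some trajectory $x(\cdot)$ of \eqref{eq:theoremsystem} under a measurable $u(t) \in K_{\gamma}(x(t))$ with $x(0) \in S$ leaves $S$, and set $t^* = \inf\{t > 0 : x(t) \notin S\}$. By continuity of $x(\cdot)$ and closedness of $S$ we get $x(t) \in S$ on $[0,t^*]$ and $x(t^*) \in \partial S$. Since $\partial S \subset \textnormal{int}(\Omega)$, the point $x(t^*)$ has an open bounded neighborhood contained in $D$ on which $G_{\gamma}$ satisfies every hypothesis of Theorem \ref{thm:Clarkestrong}; applying that theorem (equivalently, re-running the argument of Theorem \ref{thm:main1} restarted at $x(t^*)$) forces $x(t) \in S$ for all $t$ in a right-neighborhood of $t^*$, contradicting the definition of $t^*$. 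Hence no finite exit time exists and $x(t) \in S$ for all $t$ in the trajectory's interval of existence.

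Finally I would argue that the interval of existence is all of $[0,\infty)$: because $x(t)$ is confined to the compact set $S$ (Assumption \ref{assume:compactS}), which lies inside $D$ where $G_{\gamma}$ is locally bounded, the trajectory cannot exhibit finite escape time, so $T(x_0) = \infty$ and strong invariance of $(S,G_{\gamma})$ follows. I expect the main obstacle to be the bookkeeping around the global-versus-local reading of Theorem \ref{thm:Clarkestrong}: the local Lipschitzness and Standing Hypotheses hold only on $D \subset \Omega$, not on all of $\R^n$, so one must justify that confining the trajectory to $S \subset D$ makes the local hypotheses sufficient. This is precisely where the assumption $S \subset D$ with $D$ open does the work, since it supplies a cushion—a neighborhood of $S$ lying inside $\Omega$—that prevents the trajectory from ever reaching the boundary of $\Omega$ where $K_{\gamma}$, and hence $G_{\gamma}$, could fail to satisfy the Standing Hypotheses. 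An alternative route that avoids the contradiction step is to extend $G_{\gamma}$ to a globally defined set-valued map agreeing with $G_{\gamma}$ on a neighborhood of $S$ and satisfying the Standing Hypotheses on $\R^n$, apply Theorem \ref{thm:Clarkestrong} to the extension, and note that invariance keeps every trajectory inside $S \subset D$, where the two maps coincide.
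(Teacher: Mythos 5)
Your proof is correct and follows essentially the same route as the paper: verify via Theorem \ref{thm:One} and Lemmas \ref{lem:Two}--\ref{lem:Three} that $G_{\gamma}$ satisfies the hypotheses of Theorem \ref{thm:Clarkestrong} on the neighborhood $D \supset S$, and use compactness of $S$ (Assumption \ref{assume:compactS}) to rule out finite escape time. The paper simply invokes Theorem \ref{thm:Clarkestrong} directly at that point; your first-exit-time contradiction and the global-extension alternative are more careful elaborations of that same application rather than a different argument.
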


\begin{proof}
    Since $S \subset D \subseteq \Omega$, by Theorem \ref{thm:One} and Lemma \ref{lem:Two}, $G_{\gamma}(x)$ satisfies all the Standing Hypotheses from Definition \ref{def:standinghyp} for all $x \in S$ and is locally Lipschitz for all $x\in S$. 
    Since $S$ is also compact by Assumption \ref{assume:compactS}, the interval of existence for all solutions to \eqref{eq:theoremsystem} is $[0,\infty)$.
    By Lemma \ref{lem:Three}, we have $G_{\gamma}(x) \subseteq T_S(x)$ for all $x \in \R^n$.
    The result follows by Theorem \ref{thm:Clarkestrong}.
\end{proof}

\subsection{Invariance of Multiple Sets}
\label{sec:multiplesets}
In this section, we discuss how to incorporate multiple safety requirements in an optimization framework and discuss conditions under which the resulting optimization problem is feasible. Consider the set of functions $h_i:\mathbb R^n\rightarrow \mathbb R$ defining the sets $S_{i} = \{x\; |\; h_i(x)\leq 0\}$, for $i =1, 2, \ldots, N_h$. Defining the composed set $S_I = \bigcap_{i=1}^{N_h} S_i$, we seek to render the set $S_I$ strongly invariant.
Recall that the functions $h_i$ satisfy Assumption \ref{assume:necessary}.
Incorporating general nonsmooth $h_i$ functions into this analysis will be considered in future work.

Similar to the previous section, we define the set-valued map $\widehat{K}(x)$ as
\begin{align}
\begin{aligned}
\label{eq:multiKforreal}
        &\widehat{K}(x) = \brc{u \in \R^m : \bmx{\hat{A}_S(x) \\ A_u} u \leq \bmx{\hat{b}_S(x) \\ b_u}}, \\
        &\hat{A}_S : \R^n \rarr \R^{q \times m},\ \hat{b}_S : \R^n \rarr \R^{q}.
\end{aligned}
\end{align}
In this case we define
\begin{align}
    \hat{A}_S(x) &= \bmx{L_g h_1(x) \\ \vdots \\ L_g h_{N_h}(x)}, \nonumber \\
    \hat{b}_S(x) &= \bmx{-\alpha_1(h_1(x)) - L_f h_1(x) \\ \vdots \\ -\alpha_{N_h}(h_{N_h}(x)) - L_f h_{N_h}(x)}, \label{eq:ABmultiple}
\end{align}
where each $\alpha_i(\cdot)$ is a extended class-$\mathcal{K}_\infty$ function which is locally Lipschitz on $\R$. Using $\widehat{K}(x)$ we define $\widehat{\Omega} = \{x \in \R^n : \text{int}(\widehat{K}(x)) \neq \emptyset  \} $.
Given a bounded, open domain $D \subset \widehat{\Omega}$, we also define $\widehat{K}_{\hat{\gamma}}(x)$ as
\begin{align}
    \widehat{K}_{\hat{\gamma}}(x) = \textnormal{int}(\widehat{K}(x)) - \hat{\gamma} B(0,1) 
\end{align}
where $\hat{\gamma}$ satisfies $0 < \hat{\gamma} < \inf_{x \in D} R_C(\widehat{K}(x))$ and $R_C(\cdot)$ is defined in \eqref{eq:chebyshev}.
The set-valued map $\widehat{G}_{\hat{\gamma}}(x)$ is defined as
\begin{align}
    \widehat{G}_{\hat{\gamma}}(x) = \brc{v \in \R^n :  v = f(x) + g(x) u,\  u \in \widehat{K}_{\hat{\gamma}}(x)}.
\end{align}

\noindent Using prior results, we can then show the following properties on $\widehat{G}_{\hat{\gamma}}(x)$.

\begin{lemma}
\label{lem:barG}
    Let $D \subset \widehat{\Omega}$ be a bounded, open set. Then the set-valued map $\widehat{G}_{\hat{\gamma}}$ is locally Lipschitz on $D$ and satisfies all the Standing Hypotheses from Definition \ref{def:standinghyp} for all $x \in D$. Furthermore, for all $x \in D$ we have
    \begin{align}
    \label{eq:Tintersect}
        \widehat{G}_{\hat{\gamma}}(x) \subseteq \bigcap_{i=1}^{N_h} T_{S_i}(x)
    \end{align}
\end{lemma}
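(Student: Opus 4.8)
The plan is to exploit the fact that $\widehat{K}$, $\widehat{K}_{\hat{\gamma}}$, and $\widehat{G}_{\hat{\gamma}}$ are built by exactly the same recipe as their single-set counterparts $K$, $K_{\gamma}$, $G_{\gamma}$, with the single constraint block $A_S,b_S$ replaced by the stacked blocks $\hat{A}_S,\hat{b}_S$ of \eqref{eq:ABmultiple}. Consequently, the local-Lipschitz and Standing-Hypotheses claims should follow by re-running the earlier machinery almost verbatim, and only the tangent-cone inclusion \eqref{eq:Tintersect} requires genuinely new (but short) reasoning. The first step is to verify that $\hat{A}_S$ and $\hat{b}_S$ are locally Lipschitz on $D$: each block row $L_g h_i$ and $-\alpha_i(h_i) - L_f h_i$ is a sum and product of locally Lipschitz functions, since Assumption \ref{assume:necessary} gives $h_i \in \mathcal{C}^{1,1}_{loc}$, the dynamics $f,g$ are locally Lipschitz, and each $\alpha_i$ is locally Lipschitz; stacking finitely many locally Lipschitz maps preserves local Lipschitzness.

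With $\hat{A}_S,\hat{b}_S$ locally Lipschitz, I would apply Lemma \ref{lem:KLips} (with $\hat{A}_S,\hat{b}_S$ and $\widehat{\Omega}$ in the roles of $A_S,b_S$ and $\Omega$) to obtain that $\textnormal{int}(\widehat{K})$ is locally Lipschitz on $D \subset \widehat{\Omega}$, then Lemma \ref{lem:kgamma} to obtain that $\widehat{K}_{\hat{\gamma}}$ is locally Lipschitz on $D$, the choice $0 < \hat{\gamma} < \inf_{x\in D} R_C(\widehat{K}(x))$ guaranteeing $\widehat{K}_{\hat{\gamma}}$ stays nonempty. Finally I would invoke the argument of Theorem \ref{thm:One} — which uses only local Lipschitzness of $f$, $g$ and the contracted feasible map together with the set-product Lemmas \ref{lem:setmult}, \ref{lem:setmatrixprod} — to conclude that $\widehat{G}_{\hat{\gamma}}$ is locally Lipschitz on $D$. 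The Standing Hypotheses then follow exactly as in Lemma \ref{lem:Two}: $\widehat{K}_{\hat{\gamma}}(x)$ is nonempty (by the choice of $\hat{\gamma}$), closed, convex, and bounded since $\widehat{K}_{\hat{\gamma}}(x)\subset\widehat{K}(x)\subseteq\mathcal{U}$, so its affine image $\widehat{G}_{\hat{\gamma}}(x)$ is nonempty, compact, and convex, while local Lipschitzness yields upper semicontinuity and local boundedness.

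For the inclusion \eqref{eq:Tintersect} I would fix $x\in D$ and $v\in\widehat{G}_{\hat{\gamma}}(x)$, write $v=f(x)+g(x)u$ with $u\in\widehat{K}_{\hat{\gamma}}(x)\subseteq\widehat{K}(x)$, and read off from \eqref{eq:multiKforreal}--\eqref{eq:ABmultiple} the key inequality $\langle \nabla h_i(x),v\rangle = L_f h_i(x)+L_g h_i(x)u \leq -\alpha_i(h_i(x))$, which holds simultaneously for every $i$ precisely because all the constraint rows are encoded in $\widehat{K}$. I would then establish membership in each $T_{S_i}(x)$ separately, mirroring Lemma \ref{lem:Three}: if $x\in\textnormal{int}(S_i)$ then $T_{S_i}(x)=\R^n$; and if $x\in\partial S_i$ then $h_i(x)=0$, so that, since each $\alpha_i$ is an extended class-$\mathcal{K}_\infty$ function with $\alpha_i(0)=0$, the key inequality collapses to $\langle\nabla h_i(x),v\rangle\leq 0$, giving $v\in T_{S_i}(x)$ by Lemma \ref{lem:cones}. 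Taking the intersection over $i$ yields $v\in\bigcap_{i=1}^{N_h} T_{S_i}(x)$, which is the claim.

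The main difficulty here is conceptual bookkeeping rather than computation: I must ensure the per-set argument is valid for every index at once, which rests on two points — that the contraction preserves the defining inequalities, i.e. $\widehat{K}_{\hat{\gamma}}(x)\subseteq\widehat{K}(x)$ so no constraint row is discarded, and that the statement only asserts inclusion in the intersection $\bigcap_i T_{S_i}(x)$ rather than in $T_{S_I}(x)$, the latter possibly being strictly smaller and requiring a transversality hypothesis to recover. I would also flag the boundary case inherited from Lemma \ref{lem:Three} as the crux: it is exactly on $\partial S_i$ that $\alpha_i(0)=0$ converts the strict-CBF margin carried by the feasibility constraints into the non-strict inner-product bound needed for tangent-cone membership.
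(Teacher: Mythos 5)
Your proposal is correct and takes essentially the same route as the paper: the paper's own proof is a one-liner stating that the result follows by the same arguments as Lemmas \ref{lem:KLips}--\ref{lem:Three} and Theorems \ref{thm:One}--\ref{thm:main1}, with the observation that each stacked constraint row $\hat{A}_{S,i}(x)u \leq \hat{b}_{S,i}(x)$ ensures $\widehat{G}_{\hat{\gamma}}(x) \subseteq T_{S_i}(x)$. You have merely spelled out the details that the paper leaves implicit (stacking preserves local Lipschitzness, and the per-row boundary argument via $\alpha_i(0)=0$ and Lemma \ref{lem:cones}), so no further changes are needed.
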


\begin{proof}
    The result follows by using similar arguments as in Lemmas \ref{lem:KLips}-\ref{lem:Three} and Theorems \ref{thm:One}-\ref{thm:main1}.
    Note that by \eqref{eq:ABmultiple} each constraint $\hat{A}_{S,i}(x) u \leq \hat{b}_{S,i}(x)$ ensures that $\widehat{G}_{\hat{\gamma}}(x) \subseteq T_{S_i}(x)$.
\end{proof}

We are ultimately interested in rendering the composed set $S_I$ invariant by guaranteeing that $\widehat{G}_{\hat{\gamma}}(x) \subseteq T_{S_I}(x)$ for all $x \in S_I$. Note that in general $S_I$ may have a boundary that cannot be described by a $\mathcal{C}^1$ function. 
From \eqref{eq:Tintersect} the question remains as to whether $\bigcap_{i=1}^{N_h}T_{S_i}(x) \subseteq T_{S_I}(x)$ for all $x \in S_I$. If so, then by \eqref{eq:Tintersect} it holds that $\widehat{G}_{\hat{\gamma}}(x) \subseteq T_{S_I}(x)$.
Towards this end we review some mathematical preliminaries required to establish this condition. The concept of \textit{transversality} was explored in \cite{clarke2008nonsmooth} precisely to relate the intersection of tangent cones and the tangent cone of intersections of sets. Let $N_S(x)$ denote the normal cone of set $S$ at $x$. Note that since each $h_i \in \mathcal{C}^{1,1}$, by Lemma \ref{lem:cones} in the Appendix it holds that $N_{S_i}(x) = N^P_{S_i}(x)$ for all $x \in S_i$, for all $i =1,\ldots,N_h$.

\begin{define}
Transversality holds for the pair $(S_1, S_2)$ of two closed sets $S_1, S_2\subset\mathbb R^n$ if 
for all $x\in \partial S_1 \cap \partial S_2$ 
we have the following:
\begin{align}\label{eq: trans s1 s2}
    N_{S_1}(x) \cap (-N_{S_2}(x)) = \{0\},
\end{align}
\end{define}

\noindent Again, from \eqref{eq:Tintersect} we are interested in proving that $\bigcap_{i=1}^{N_h}T_{S_i}(x) \subseteq T_{S_I}(x)$ for all $x \in S_I$. The authors in \cite{clarke2008nonsmooth} presented the following implication of the transversality condition. 

\begin{lemma}\cite[pp 99]{clarke2008nonsmooth}\label{lemma trans clarke}
Let $S_1, S_2$ {\color{blue}be} defined as $S_i= \{x\; |\; h_i(x)\leq 0\}$, where $h_i\in \mathcal C^1$. If transversality condition \eqref{eq: trans s1 s2} holds for the pair $(S_1,S_2)$ then 
\begin{align}
    T_{S_1\cap S_2}(x) & = T_{S_1}(x)\cap T_{S_2}(x), \label{eq: T S1 S2 trans}\\ 
    N_{S_1\cap S_2}(x) & = N_{S_1}(x) + N_{S_2}(x), \label{eq: N S1 S2 trans} 
\end{align}
for all $x \in S_1 \cap S_2$.
\end{lemma}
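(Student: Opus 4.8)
The plan is to reduce both identities to a single normal-cone statement by exploiting the polar duality between the tangent and normal cones, which holds with equality on both sides because the sets involved are regular: since each $h_i \in \mathcal{C}^{1,1}_{loc}$, the surrounding discussion and Lemma \ref{lem:cones} give $N_{S_i}(x) = N^P_{S_i}(x) = T_{S_i}(x)^\circ$ and, by regularity, $T_{S_i}(x) = N_{S_i}(x)^\circ$. Under this duality the two claims \eqref{eq: T S1 S2 trans} and \eqref{eq: N S1 S2 trans} are equivalent: if the normal-cone sum rule $N_{S_1 \cap S_2}(x) = N_{S_1}(x) + N_{S_2}(x)$ holds, then taking polars and using the elementary identity $(C_1 + C_2)^\circ = C_1^\circ \cap C_2^\circ$ for closed convex cones yields $T_{S_1 \cap S_2}(x) = N_{S_1 \cap S_2}(x)^\circ = (N_{S_1}(x)+N_{S_2}(x))^\circ = T_{S_1}(x)\cap T_{S_2}(x)$. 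Thus it suffices to establish the normal-cone identity, and I would focus there.

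Next I would reduce to the genuine corner case by a short case analysis on $x \in S_1 \cap S_2$. If $x \in \textnormal{int}(S_1)\cap\textnormal{int}(S_2)$ then all three normal cones are $\{0\}$ and the identity is trivial; if $x \in \partial S_1 \cap \textnormal{int}(S_2)$ (or symmetrically) then $N_{S_2}(x) = \{0\}$ and $S_1 \cap S_2$ agrees with $S_1$ in a neighborhood of $x$, so $N_{S_1 \cap S_2}(x) = N_{S_1}(x)$ and the identity again reduces to a triviality. The only substantive case is $x \in \partial S_1 \cap \partial S_2$, which is exactly where transversality \eqref{eq: trans s1 s2} is assumed. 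Here I would use Lemma \ref{lem:cones} to make the cones explicit: because the strict-CBF property in Assumption \ref{assume:necessary} forces $\nabla h_i(x) \neq 0$ on $\partial S_i$, we have that $N_{S_i}(x) = \brc{\lambda\, \nabla h_i(x) : \lambda \geq 0}$ is a ray and $T_{S_i}(x) = \brc{v : \ip{\nabla h_i(x), v} \leq 0}$ is a half-space, and \eqref{eq: trans s1 s2} reads precisely that $\lambda_1 \nabla h_1(x) + \lambda_2 \nabla h_2(x) = 0$ with $\lambda_1,\lambda_2 \geq 0$ forces $\lambda_1 = \lambda_2 = 0$, i.e. the two gradients are not anti-parallel.

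The easy inclusions are automatic and I would dispatch them quickly: the contingent tangent cone is monotone under set inclusion and coincides with the Clarke cone on the regular sets $S_i$, giving $T_{S_1\cap S_2}(x) \subseteq T_{S_1}(x)\cap T_{S_2}(x)$, whose polar yields $N_{S_1}(x) + N_{S_2}(x) \subseteq N_{S_1 \cap S_2}(x)$. Transversality then enters twice in the reverse direction. First, a normalization/contradiction argument shows the sum $N_{S_1}(x)+N_{S_2}(x)$ is already closed: if it were not, one could extract $\zeta_1^k \in N_{S_1}(x)$, $\zeta_2^k \in N_{S_2}(x)$ with $\nrm{\zeta_1^k}+\nrm{\zeta_2^k}\to\infty$ while $\zeta_1^k+\zeta_2^k$ stays bounded, and after rescaling obtain a nonzero $\bar\zeta \in N_{S_1}(x)\cap(-N_{S_2}(x))$, contradicting \eqref{eq: trans s1 s2}. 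Second, transversality serves as the constraint qualification that validates the intersection (sum) rule $N_{S_1\cap S_2}(x) \subseteq N_{S_1}(x)+N_{S_2}(x)$, completing the normal-cone identity and hence, by the duality of the first paragraph, the tangent-cone identity.

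The hard part will be this reverse inclusion, equivalently the tangential statement $T_{S_1}(x)\cap T_{S_2}(x) \subseteq T_{S_1\cap S_2}(x)$: one must show that every $v$ in the linearized cone $\brc{v : \ip{\nabla h_i(x), v} \leq 0,\ i=1,2}$ is genuinely a Clarke tangent direction to the \emph{nonsmooth} set $S_1\cap S_2$. Since the boundary of $S_1 \cap S_2$ is not $\mathcal{C}^1$ at the corner, this cannot be read off the defining functions directly; I would obtain it by constructing, for each such $v$, a feasible arc in $S_1 \cap S_2$ emanating from $x$ in direction $v$, using an implicit-function or penalization argument in which the non-anti-parallel gradients guarantee the active system $h_1 = h_2 = 0$ can be solved and $v$ corrected back into the set. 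This is precisely the step where transversality is indispensable, since without it the tangent cone of the intersection can be strictly smaller than the intersection of the tangent cones.
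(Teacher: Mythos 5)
First, a point of reference: the paper never proves Lemma \ref{lemma trans clarke} at all --- it is imported verbatim as a citation from \cite[pp.~99]{clarke2008nonsmooth}, and the paper's own proving effort only begins with the multi-set extension in Lemma \ref{Lemma trans T cone}. So your attempt cannot be matched against an in-paper argument; it has to stand on its own. Much of your scaffolding does stand: the polar-duality reduction of \eqref{eq: T S1 S2 trans} to \eqref{eq: N S1 S2 trans} is valid because Clarke tangent cones are closed and convex (so $T_S(x)=N_S(x)^{\circ}$) and $(C_1+C_2)^{\circ}=C_1^{\circ}\cap C_2^{\circ}$; the localization disposing of points outside $\partial S_1\cap\partial S_2$ is fine; routing the easy inclusion $T_{S_1\cap S_2}(x)\subseteq T_{S_1}(x)\cap T_{S_2}(x)$ through the \emph{contingent} cone (which is monotone under set inclusion, unlike the Clarke cone) together with regularity of each $S_i$ is exactly the right care; and your normalization argument for closedness of $N_{S_1}(x)+N_{S_2}(x)$ is standard (here it is immediate anyway, since a sum of two rays that are not anti-parallel is closed).

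The genuine gap sits in the only step that carries the content of the lemma: the inclusion $N_{S_1\cap S_2}(x)\subseteq N_{S_1}(x)+N_{S_2}(x)$, equivalently $T_{S_1}(x)\cap T_{S_2}(x)\subseteq T_{S_1\cap S_2}(x)$. You do not prove it. Saying that transversality ``serves as the constraint qualification that validates the intersection (sum) rule'' is circular --- that the sum rule holds under this constraint qualification is precisely the assertion to be established --- and on the tangent side you defer to an unexecuted ``implicit-function or penalization argument.'' Worse, that sketch would fail in a case transversality permits: condition \eqref{eq: trans s1 s2} only forbids $\nabla h_1(x)$ and $\nabla h_2(x)$ from being anti-parallel, so they may be \emph{positively collinear}, hence linearly dependent, and then the doubly-active system $h_1=h_2=0$ cannot be solved by the implicit function theorem as you propose; the paper itself is alert to this, since the proof of Lemma \ref{Lemma trans T cone} explicitly splits off the collinear case before assuming linear independence. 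A clean way to close the gap in this smooth setting is to set $h=\max(h_1,h_2)$, so that $S_1\cap S_2=\brc{x : h(x)\leq 0}$: the function $h$ is locally Lipschitz and regular with Clarke subdifferential $\cvxc\brc{\nabla h_1(x),\nabla h_2(x)}$ at a doubly-active point; transversality plus nonvanishing of the gradients (which, as you correctly note, follows from Assumption \ref{assume:necessary}) gives $0\notin\cvxc\brc{\nabla h_1(x),\nabla h_2(x)}$, and Clarke's sublevel-set theorem for regular functions then yields $T_{S_1\cap S_2}(x)=\brc{v : \ip{\nabla h_1(x),v}\leq 0,\ \ip{\nabla h_2(x),v}\leq 0}=T_{S_1}(x)\cap T_{S_2}(x)$ in one stroke, covering the collinear and independent cases uniformly, after which your duality paragraph delivers \eqref{eq: N S1 S2 trans}.
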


Lemma \ref{lemma trans clarke} states the relation between the tangent and the normal cones of the intersection of two sets $S_1, S_2$, defined as zero sub-level sets of smooth functions, when the transversality condition holds. We can extend this result for arbitrary number of sets when pairwise transversality condition holds. Let $I:\mathbb R^n\rightarrow 2^{N_h}$ be the collection of indices of sets intersecting on their boundaries, defined as $I(x) = \{i\; |\; \exists j\neq i, h_i(x) = h_j(x) = 0\}$. 


\begin{lemma}\label{Lemma trans T cone}
If the transversality condition holds for the pair $(S_i, S_j)$ for all $i, j\in I(x)$, then 
\begin{align}\label{eq: transv S1 S2 SN}
    T_{\pth{\cap_i S_i}}(x) = \bigcap_{i=1}^{N_h} T_{S_i}(x),
\end{align}
holds for all $x \in (\bigcap_{i=1}^{N_h} S_i)$.
\end{lemma}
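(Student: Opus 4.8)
The plan is to prove \eqref{eq: transv S1 S2 SN} by induction on the number of sets, bootstrapping from the two-set version in Lemma \ref{lemma trans clarke} and exploiting the tangent--normal duality $T_{S}(x)=N_{S}(x)^{\circ}$, $N_{S}(x)=T_{S}(x)^{\circ}$ together with the fact (Lemma \ref{lem:cones}) that at a boundary point of a $\mathcal C^{1,1}$ set one has $N_{S_i}(x)=\mathbb{R}_{\ge 0}\,\nabla h_i(x)$ and $T_{S_i}(x)=\{v:\ip{\nabla h_i(x),v}\le 0\}$. The idea is to carry both the tangent-cone identity and the normal-cone identity \eqref{eq: N S1 S2 trans} through the induction simultaneously, since each inductive step will need the normal-cone formula from the previous step in order to check a transversality hypothesis.

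First I would reduce to the active constraints. Fix $x\in\bigcap_i S_i$ and let $A(x)=\{i:h_i(x)=0\}$. For every inactive index $i\notin A(x)$ we have $h_i(x)<0$, so $x\in\textnormal{int}(S_i)$, whence $T_{S_i}(x)=\R^n$ and, locally around $x$, $\bigcap_i S_i$ coincides with $\bigcap_{i\in A(x)}S_i$. Thus neither side of \eqref{eq: transv S1 S2 SN} changes if the inactive sets are discarded, and it suffices to treat the active indices, which (when $|A(x)|\ge 2$) are exactly those in $I(x)$. If $|A(x)|\le 1$ the claim is immediate, so assume $|A(x)|=m\ge 2$.

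Next, relabel the active sets $S_1,\dots,S_m$, set $P_k=\bigcap_{i=1}^{k}S_i$, and prove by induction on $k$ the pair of statements $T_{P_k}(x)=\bigcap_{i=1}^{k}T_{S_i}(x)$ and $N_{P_k}(x)=\sum_{i=1}^{k}N_{S_i}(x)$, the base case $k=2$ being Lemma \ref{lemma trans clarke}. For the inductive step I would apply the two-set result to the pair $(P_k,S_{k+1})$: since $P_{k+1}=P_k\cap S_{k+1}$, the tangent formula $T_{P_{k+1}}(x)=T_{P_k}(x)\cap T_{S_{k+1}}(x)$ combined with the inductive hypothesis gives $T_{P_{k+1}}(x)=\bigcap_{i=1}^{k+1}T_{S_i}(x)$, and likewise for the normal formula. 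Licensing this step requires verifying that $(P_k,S_{k+1})$ is transversal at $x$, i.e. $N_{P_k}(x)\cap(-N_{S_{k+1}}(x))=\{0\}$; by the inductive normal-cone formula this reads
\begin{align}
\Big(\textstyle\sum_{i=1}^{k}N_{S_i}(x)\Big)\cap\big(-N_{S_{k+1}}(x)\big)=\{0\}.\nonumber
\end{align}
A secondary technical point here is that $P_k$ need not itself be a $\mathcal C^1$ sublevel set, so this application uses the two-set transversality result in its general closed-set form (of which the $\mathcal C^1$ statement of Lemma \ref{lemma trans clarke} is a specialization), as established in \cite{clarke2008nonsmooth}.

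The step I expect to be the main obstacle is verifying the displayed condition from the pairwise hypotheses. Since each active normal cone is the ray $\mathbb{R}_{\ge 0}\nabla h_i(x)$, the condition is equivalent to \emph{positive linear independence} of the active gradients: $\sum_{i=1}^{m}\lambda_i\nabla h_i(x)=0$ with $\lambda_i\ge 0$ forces $\lambda=0$. The pairwise transversality hypothesis \eqref{eq: trans s1 s2} only asserts that no two active gradients are antiparallel, which constrains pairs but not joint nonnegative combinations, so the reduction to positive linear independence is exactly where the argument is delicate. I would therefore pin down the operative form of the hypothesis: by Gordan's theorem, positive linear independence of $\{\nabla h_i(x)\}_{i\in A(x)}$ is equivalent to the existence of a common descent direction $v$ with $\ip{\nabla h_i(x),v}<0$ for all $i\in A(x)$, i.e. $\textnormal{int}\big(\bigcap_{i}T_{S_i}(x)\big)\neq\emptyset$. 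Once positive linear independence is available, every partial sum $\sum_{i\le k}N_{S_i}(x)$ is a pointed, closed, finitely generated cone, the relation $\sum_{i\le k}\lambda_i\nabla h_i(x)+\mu\nabla h_{k+1}(x)=0$ admits only the trivial nonnegative solution, so $(P_k,S_{k+1})$ is transversal, the two-set lemma applies, and the induction closes; ranging over all $x\in\bigcap_i S_i$ then yields \eqref{eq: transv S1 S2 SN}. Accordingly, the crux of the proof is to show that the stated pairwise conditions indeed supply this joint condition at each $x\in I(x)$ of interest (or, failing that, to read the transversality hypothesis in its joint/positive-linear-independence form), and I would expect the bulk of the work to concentrate there.
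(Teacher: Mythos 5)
Your proposal follows essentially the same route as the paper's own proof: reduce to the active indices, induct on the partial intersections $P_k=\bigcap_{i\le k}S_i$, apply the two-set result (Lemma \ref{lemma trans clarke}) to the pair $(P_k,S_{k+1})$, and certify transversality of that pair through the normal-cone formula $N_{P_k}(x)=\sum_{i\le k}N_{S_i}(x)$. The difference is that you stop, honestly, at the step you call the crux --- deducing the joint condition (positive linear independence of the active gradients) from the pairwise hypothesis --- whereas the paper asserts it. That step is a genuine gap, and it cannot be closed, because the implication is false: the unit vectors $n_1=(1,0)$, $n_2=(-\tfrac12,\tfrac{\sqrt3}{2})$, $n_3=(-\tfrac12,-\tfrac{\sqrt3}{2})$ are pairwise non-antiparallel, yet $n_1+n_2+n_3=0$. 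Worse, the lemma itself fails under only pairwise transversality. In $\R^3$ take $h_i(x,y,z)=\ip{n_i,(x,y)}+z^2$ and $S_i=\brc{h_i\le 0}$, $i=1,2,3$. At any point of $\partial S_i\cap\partial S_j$ the proximal normal cones are the rays through $(n_i,2z)$ and $(n_j,2z)$, which are never opposite, so every pair $(S_i,S_j)$ satisfies \eqref{eq: trans s1 s2}. Yet summing the three inequalities gives $0\le -3z^2$, whence $S_1\cap S_2\cap S_3=\{0\}$ and $T_{\cap_i S_i}(0)=\{0\}$, while $\bigcap_i T_{S_i}(0)=\brc{(0,0,v_3):v_3\in\R}$, so \eqref{eq: transv S1 S2 SN} fails at the origin. (Compactness of the $S_i$ and the strict-CBF assumption play no role in this purely geometric issue; the sets can be modified far from the origin to satisfy them.)

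For what it is worth, the paper's own proof founders at exactly the point you singled out: it infers from pairwise linear independence of $\brc{\nabla h_i,\nabla h_k}$ and $\brc{\nabla h_j,\nabla h_k}$ that $\nabla h_i,\nabla h_j,\nabla h_k$ are jointly linearly independent, which is false in general (any three pairwise independent vectors lying in a two-dimensional subspace). So your diagnosis is correct, and your fallback is the right repair: the hypothesis must be taken in its joint form, i.e., positive linear independence of the active gradients at every $x\in\bigcap_i S_i$ (equivalently, by Gordan's theorem, existence of a common strict descent direction $v$ with $\ip{\nabla h_i(x),v}<0$ for all active $i$). Under that strengthened hypothesis every partial sum $\sum_{i\le k}N_{S_i}(x)$ meets $-N_{S_{k+1}}(x)$ only at the origin, and your induction closes --- including your observation, which the paper glosses over, that the inductive step must invoke the general closed-set version of the two-set transversality theorem from \cite{clarke2008nonsmooth}, since $P_k$ is not itself a $\mathcal{C}^1$ sublevel set.
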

\begin{proof}
Note that the result holds trivially for any $x\in \bigcap_i \textnormal{int}(S_i)$. So, in the rest of the proof, we assume that $x$ is on the boundary of the considered set(s).  

Consider $i, j\in I(x)$ for some $x\in (\bigcap_{i=1}^{N_h} S_i)$. Since transversality holds for the pair $(S_i, S_j)$, we know that there does not exist $k> 0$ such that $\nabla h_i(x) + k\nabla h_j(x) = 0$, i.e., the vectors $\nabla h_i, \nabla h_j$ are not \textit{anti}-parallel.\footnote{This is true since \eqref{eq: trans s1 s2} does not hold when $\nabla h_i$ and $\nabla_j$ point in exactly opposite directions.} Now, the case when $\nabla h_i$ and $\nabla h_j$ are co-linear is trivial\footnote{This case is trivial because co-linearity of $\nabla h_i, \nabla h_j$ implies that their normal and tangent cones coincide.}, and thus, we focus on the case when neither the vectors $\nabla h_i, \nabla h_j$ are co-linear, nor they are anti-parallel. In other words, we focus on the case when transversality of $(S_i, S_j)$ implies that $\nabla h_i, \nabla h_j$ are linearly independent. 
Consider now another set $S_k$, whose normal cone is given by $N_{S_k} = \{y\; |\; y = c\nabla h_k, c\geq 0\}$ for all $x\in \partial S_{k}$. The normal cone of $S_1\cap S_2$ is given by $N_{S_i\cap S_j} = \{y\; |\; y = c_i\nabla h_i+c_j\nabla h_j, c_i, c_j\geq 0\}$. Since transversality holds for $(S_i, S_k)$ and $(S_j, S_k)$, we know that $\nabla h_i(x), \nabla h_k(x)$ and $\nabla h_j(x), \nabla h_k(x)$ are also linear independent. Thus, we obtain that $\nabla h_i(x), \nabla h_j(x), \nabla h_k(x)$ are linearly independent, and so, $y = c_i\nabla h_i+c_j\nabla h_j$ is linearly independent of $\nabla h_k$ for all $c_i, c_j>0$, i.e.,
\begin{align*}
     N_{S_i\cap S_j}(x) \cap (-N_{S_k}(x)) = \{0\},
\end{align*}
and hence, transversality holds for $S_i\cap S_j$ and $S_k$. Using the same set of arguments repeatedly, it is easy to show that transversality holds for $\cap_{i\neq j}S_i$ and $S_j$ for any $j$, and thus, \eqref{eq: transv S1 S2 SN} holds. \end{proof}

With the prior results, we are ready to present the following results on the strong invariance of $S_I$. Theorem \ref{thm:main2} and Corollary \ref{cor:main2}, presented below, are the multiple-set counterparts of Theorem \ref{thm:main1} and Corollary \ref{cor:main1}.

\begin{theorem}
    \label{thm:main2}
    Consider the system 
    \begin{align} 
    \label{eq:theoremsystem2}
    \dot{x}(t) \in \widehat{G}_{\hat{\gamma}}(x(t)).
    \end{align}
    Consider the set $S_I = \bigcap_{i = 1}^{N_h} S_i$ and suppose that the transversality condition holds for the pair $(S_i, S_j)$ for all $i, j\in I(x)$.
    Let $x(\cdot)$ be any trajectory of \eqref{eq:theoremsystem2} under a Lebesgue measurable control input $u(\cdot)$ with $x_0 = x(0) \in \textnormal{int}(S_I \cap \widehat{\Omega})$. Let $[0,T(x_0))$ be the (possibly empty) maximal interval such that $x(t) \in \textnormal{int}(\widehat{\Omega})$ for all $t \in [0,T(x_0))$. Then $x(t) \in S_I$ for all $t \in [0,T(x_0))$.
\end{theorem}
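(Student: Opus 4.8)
The plan is to follow the single-set argument of Theorem \ref{thm:main1} step for step, substituting the smooth tangent-cone identity of Lemma \ref{lem:Three} with the transversality results of Lemmas \ref{lemma trans clarke} and \ref{Lemma trans T cone}. Three facts must be assembled: that $\widehat{G}_{\hat{\gamma}}$ is locally Lipschitz and satisfies the Standing Hypotheses on the relevant region; that $\widehat{G}_{\hat{\gamma}}(x) \subseteq T_{S_I}(x)$ for all $x \in S_I$; and that Theorem \ref{thm:Clarkestrong} then confines the trajectory to $S_I$ as long as it stays in $\textnormal{int}(\widehat{\Omega})$.

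First I would localize. Since $x(t) \in \textnormal{int}(\widehat{\Omega})$ on $[0, T(x_0))$, for any compact subinterval $[0,\tau] \subset [0,T(x_0))$ the continuous image $x([0,\tau])$ is a compact subset of the open set $\textnormal{int}(\widehat{\Omega})$, hence contained in some bounded open $D$ with $D \subset \widehat{\Omega}$. On such a $D$, Lemma \ref{lem:barG} provides that $\widehat{G}_{\hat{\gamma}}$ is locally Lipschitz and satisfies all the Standing Hypotheses of Definition \ref{def:standinghyp}; this both guarantees existence of solutions of \eqref{eq:theoremsystem2} and verifies the hypotheses of Theorem \ref{thm:Clarkestrong} on $D$.

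Next I would prove the tangent-cone inclusion on $S_I$. Lemma \ref{lem:barG} already yields $\widehat{G}_{\hat{\gamma}}(x) \subseteq \bigcap_{i=1}^{N_h} T_{S_i}(x)$, so it remains to show $\bigcap_{i=1}^{N_h} T_{S_i}(x) \subseteq T_{S_I}(x)$. Because $S_I$ generally lacks a $\mathcal{C}^1$ boundary, the smooth characterization of Lemma \ref{lem:cones} exploited in Lemma \ref{lem:Three} is not available at nonsmooth boundary points; this is precisely the gap that transversality fills. Invoking Lemma \ref{Lemma trans T cone}, the assumed pairwise transversality of $(S_i, S_j)$ for all $i,j \in I(x)$ gives the equality $T_{S_I}(x) = \bigcap_{i=1}^{N_h} T_{S_i}(x)$ at every $x \in S_I$. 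Chaining the two inclusions produces $\widehat{G}_{\hat{\gamma}}(x) \subseteq T_{S_I}(x)$ for all $x \in S_I$, the multiple-set analogue of Lemma \ref{lem:Three}.

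With localization and the tangent-cone inclusion established, I would close exactly as in Theorem \ref{thm:main1}: starting from $x_0 \in \textnormal{int}(S_I \cap \widehat{\Omega})$, Theorem \ref{thm:Clarkestrong} applied on $D$ keeps $x(t) \in S_I$ on each compact subinterval, and letting $\tau \to T(x_0)$ yields $x(t) \in S_I$ for all $t \in [0, T(x_0))$. I expect the main obstacle to be the middle step, namely justifying $\bigcap_{i} T_{S_i}(x) \subseteq T_{S_I}(x)$ at the nonsmooth corner points of $S_I$; everything else is a routine transcription of the single-set proof, whereas this inclusion genuinely depends on the transversality hypothesis and is the one place the multiple-set result is not a formal copy of the single-set one.
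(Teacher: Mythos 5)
Your proposal is correct and follows essentially the same route as the paper's proof: transversality plus Lemma \ref{Lemma trans T cone} gives $T_{S_I}(x) = \bigcap_{i=1}^{N_h} T_{S_i}(x)$, which combined with the inclusion $\widehat{G}_{\hat{\gamma}}(x) \subseteq \bigcap_{i} T_{S_i}(x)$ from Lemma \ref{lem:barG} and the Standing Hypotheses allows Theorem \ref{thm:Clarkestrong} to be applied as in Theorem \ref{thm:main1}. Your explicit localization to a bounded open $D \subset \widehat{\Omega}$ via compact subintervals merely spells out what the paper compresses into ``similar arguments as in Theorem \ref{thm:main1}.''
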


\begin{proof}
    Since transversality holds for all $x \in S_I$, by Lemma \ref{Lemma trans T cone} we have that $T_{S_I}(x) = \bigcap_{i=1}^{N_h} T_{S_i}(x)$ for all $x \in S_I$.
    The result then follows from Lemma \ref{lem:barG} using similar arguments as in Theorem \ref{thm:main1}.
\end{proof}

\begin{corr}
    \label{cor:main2}
    Under the hypotheses of Theorem \ref{thm:main1}, suppose there exists a bounded, open domain $D \subseteq \widehat{\Omega}$ such that $S_I \subset D$. If $x_0 = x(0) \in S_I$ then any Lebesgue measurable control input $u(t) \in \widehat{K}_{\hat{\gamma}}(x(t))$ renders the pair $(S_I,\widehat{G}_{\hat{\gamma}})$ strongly invariant; i.e. $x(t) \in S_I$ for all $t \geq 0$.
\end{corr}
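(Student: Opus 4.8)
The plan is to mirror the proof of Corollary \ref{cor:main1}, replacing the single-set objects with their hatted multiple-set counterparts and inserting the transversality identity to handle the (possibly nonsmooth) boundary of $S_I$. Since every analytic property needed has already been established in Lemma \ref{lem:barG} and Lemma \ref{Lemma trans T cone}, the corollary is essentially an assembly argument: the hypothesis $S_I \subset D \subseteq \widehat{\Omega}$ upgrades the local-in-time conclusion of Theorem \ref{thm:main2} to a global-in-time strong invariance statement by removing the possibility that a trajectory exits $\widehat{\Omega}$.

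First I would invoke Lemma \ref{lem:barG} on the bounded, open set $D$: this gives that $\widehat{G}_{\hat{\gamma}}$ is locally Lipschitz on $D$ and satisfies all the Standing Hypotheses of Definition \ref{def:standinghyp} for every $x \in D$, and in particular for every $x \in S_I$ since $S_I \subset D$. Local Lipschitzness guarantees existence of solutions to \eqref{eq:theoremsystem2}. Next I would note that $S_I = \bigcap_{i=1}^{N_h} S_i$ is compact, being a finite intersection of the compact sets $S_i$ (Assumption \ref{assume:compactS}); combined with local boundedness on $D$ this rules out finite escape time, so the maximal interval of existence of any solution starting in $S_I$ is $[0,\infty)$.

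The crucial step is the tangent-cone inclusion $\widehat{G}_{\hat{\gamma}}(x) \subseteq T_{S_I}(x)$ for all $x \in S_I$. By the transversality hypothesis and Lemma \ref{Lemma trans T cone}, we have $T_{S_I}(x) = \bigcap_{i=1}^{N_h} T_{S_i}(x)$ for every $x \in S_I$ (trivially for interior points, and via transversality for points lying on one or more boundaries $\partial S_i$). Combining this identity with the inclusion \eqref{eq:Tintersect} from Lemma \ref{lem:barG}, namely $\widehat{G}_{\hat{\gamma}}(x) \subseteq \bigcap_{i=1}^{N_h} T_{S_i}(x)$, yields $\widehat{G}_{\hat{\gamma}}(x) \subseteq T_{S_I}(x)$ for all $x \in S_I$. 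Having verified the Standing Hypotheses, local Lipschitzness, and this inclusion, I would apply Theorem \ref{thm:Clarkestrong} with $S = S_I$ and $F = \widehat{G}_{\hat{\gamma}}$ to conclude that $(S_I, \widehat{G}_{\hat{\gamma}})$ is strongly invariant; since trajectories then remain in $S_I \subset D \subseteq \widehat{\Omega}$, they never leave the domain on which all hypotheses hold, so $x(t) \in S_I$ for all $t \geq 0$.

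The main obstacle is not any single computation but rather ensuring the transversality identity is valid on all of $S_I$, including the nonsmooth boundary points where several constraints are simultaneously active. This is precisely the content of Lemma \ref{Lemma trans T cone}, whose hypothesis requires pairwise transversality of $(S_i, S_j)$ for all $i,j \in I(x)$ at each such point; the corollary inherits this assumption, so the delicate geometric reasoning is already discharged. A secondary point worth stating explicitly is that the ``for all $t \geq 0$'' conclusion is a self-consistent closed-loop statement: invariance keeps the trajectory inside $S_I \subseteq \widehat{\Omega}$, which is exactly the region where $\widehat{G}_{\hat{\gamma}}$ is well-defined and satisfies the hypotheses of Theorem \ref{thm:Clarkestrong}, so there is no circularity once compactness has excluded finite escape time.
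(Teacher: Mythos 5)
Your proposal is correct and follows essentially the same route as the paper: the paper's proof of Corollary \ref{cor:main2} simply defers to ``similar arguments as in Corollary \ref{cor:main1}'', and your write-up is precisely that argument spelled out --- Lemma \ref{lem:barG} for the Standing Hypotheses and local Lipschitzness on $D$, compactness of $S_I$ to exclude finite escape time, the transversality identity of Lemma \ref{Lemma trans T cone} combined with \eqref{eq:Tintersect} to get $\widehat{G}_{\hat{\gamma}}(x) \subseteq T_{S_I}(x)$, and finally Theorem \ref{thm:Clarkestrong}. Your explicit note that transversality must be inherited as a hypothesis (the statement's reference to Theorem \ref{thm:main1} rather than Theorem \ref{thm:main2} appears to be a typo in the paper) is a useful clarification rather than a deviation.
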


\begin{proof}
       The result follows by using similar arguments as in Corollary \ref{cor:main1}.
\end{proof}


We now present an optimization problem which generates control inputs $u(t)$ which lie in the interior of $\widehat{K}(x)$. Define $z = \bmxs{v^T & \delta_1 & \ldots & \delta_{N_h}}^T$ and consider the optimization problem
{\small
\begin{subequations}\label{QP gen int}
\begin{align}
\min_{v, \delta_{1}, \delta_{2},\ldots , \delta_{N_h}} \; & C(z) \\
    \textrm{s.t.} \quad \quad  A_uv  \leq & b_u, \label{C1 cont const}\\
    L_{f_i}h_{s_i} + L_{g_i}h_{s_i}v \leq &-\delta_{i}h_{i},\quad i = 1, 2, \ldots, N_h\label{C3 safe const}
\end{align}
\end{subequations}}\normalsize
where 
$C:\mathbb R^{m+N_h}:\mathbb R$ is a convex objective function. Special cases of \eqref{QP gen int} include a simple feasibility problem, a linear program, and a quadratic program. Under the conditions of the results of this paper, any Lebesgue-measurable $u(t)$ computed from \eqref{QP gen int} will render the set $S_I$ invariant as per the results in Theorem \ref{thm:main2} and Corollary \ref{cor:main2}.

The following result provides guarantees on the feasibility of the optimization problem in \eqref{QP gen int}.

\begin{theorem}
\label{thm:last}
Suppose that the transversality condition holds for pair of any two sets $(S_{i}, S_{j})$ for all $i, j \in I(x)$, and $x\in \partial S_i\cap \partial S_j$. Then under Assumption \ref{assume:necessary}, the optimization problem \eqref{QP gen int} is feasible for all $x\in S_I$, and the set $\widehat{K}(x)$ from \eqref{eq:multiKforreal} has a non-empty interior for all $x\in S_I$.
\end{theorem}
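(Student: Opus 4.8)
The plan is to fix an arbitrary $x \in S_I$ and reduce both claims to a single strict‑feasibility statement in control space: the existence of a $u \in \textnormal{int}(\mathcal{U})$ satisfying $L_f h_i(x) + L_g h_i(x) u < 0$ for every index in the active set $A(x) = \{i : h_i(x) = 0\}$. First I would dispose of the inactive constraints. For indices $i$ with $h_i(x) < 0$ we have $-\alpha_i(h_i(x)) > 0$, so in \eqref{QP gen int} the term $-\delta_i h_i(x)$ can be made arbitrarily large by taking $\delta_i$ large; since $v$ ranges over the compact $\mathcal{U}$ and $L_f h_i, L_g h_i$ are continuous, these constraints are non‑binding. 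The corresponding constraints in $\widehat{K}(x)$ carry strictly positive slack and are therefore not the source of difficulty. Hence feasibility of \eqref{QP gen int}, and likewise non‑emptiness of $\textnormal{int}(\widehat{K}(x))$, hinges on the active block, for which $\alpha_i(h_i(x)) = \alpha_i(0) = 0$ and the requirement collapses to $L_f h_i(x) + L_g h_i(x) u < 0$.

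Next I would split on the number of active indices. If $|A(x)| \leq 1$ the conclusion is immediate from the strict CBF property in Assumption \ref{assume:necessary}, which supplies a witness $u_i \in \mathcal{U}$ with $L_f h_i(x) + L_g h_i(x) u_i < 0$; because $\textnormal{int}(\mathcal{U}) \neq \emptyset$ and the constraint is a strict (open) linear inequality, this witness can be perturbed into $\textnormal{int}(\mathcal{U})$. The substantive case is $|A(x)| \geq 2$, where $x$ lies on several boundaries, so $A(x) \subseteq I(x)$ and the transversality hypothesis applies. Here I would reuse the mechanism of Lemma \ref{Lemma trans T cone}: pairwise transversality forces the active gradients $\{\nabla h_i(x) : i \in A(x)\}$ to be linearly independent. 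To convert this into joint strict feasibility I would apply a theorem of the alternative (Gordan/Motzkin) or, equivalently, minimax duality over the compact convex $\mathcal{U}$: the system $\{L_f h_i(x) + L_g h_i(x) u < 0,\ i \in A(x)\}$ is infeasible if and only if there is a nonzero $\lambda \geq 0$ with $\min_{u \in \mathcal{U}} w^\T(f(x) + g(x) u) \geq 0$, where $w = \sum_{i \in A(x)} \lambda_i \nabla h_i(x)$. Transversality guarantees $w \neq 0$, and I would then try to contradict this certificate using the per‑constraint strict CBF witnesses.

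The step I expect to be the genuine obstacle is exactly this last one: converting transversality, a statement about the state‑space gradients $\nabla h_i(x)$, into feasibility in control space, where each constraint is felt only through $L_g h_i(x) = \nabla h_i(x)^\T g(x)$. Linear independence of the $\nabla h_i(x)$ does not by itself defeat the certificate: if the aggregate normal $w$ is orthogonal to $\textnormal{range}(g(x))$, then $w^\T(f(x)+g(x)u) = w^\T f(x)$ is independent of $u$, and the strict CBF witnesses, which use different $u_i$ for different $i$, do not obviously contradict $w^\T f(x) \geq 0$. Overcoming this appears to require either an extra compatibility condition tying the active gradients to $\textnormal{range}(g(x))$ and $\mathcal{U}$, or a strengthening of the strict CBF property to the joint form $\inf_{u \in \mathcal{U}} \max_{i \in A(x)} [L_f h_i(x) + L_g h_i(x) u] < 0$. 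I would therefore focus the proof on showing that transversality together with the precise structure of the CBF inequalities rules out the certificate $w$; the intersection points at which several $L_g h_i(x)$ pull in opposing control directions are where the claim is most delicate, and this is where I would examine the author's argument most carefully.
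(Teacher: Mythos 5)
Your reduction---inactive constraints absorbed by the free sign of the slack variables $\delta_i$, with everything hinging on joint strict feasibility of the active block---is exactly the skeleton of the paper's own proof: for $x\in\textnormal{int}(\bigcap_i S_i)$ the paper picks an arbitrary $\bar v\in\mathcal{U}$, sets $\bar\delta_i = (L_f h_i + L_g h_i\bar v)/h_i$, and then enlarges to $\hat\delta_i>\max\{0,\sup_x|\bar\delta_i|\}$ to obtain strict inequality. But at precisely the step you flagged as the genuine obstacle, the paper offers no argument at all: it simply asserts that ``under Assumption 2 there exists $u\in\mathcal{U}$ such that the inequalities strictly hold for all $i\in I(x)$,'' i.e., that the per-constraint strict-CBF witnesses $u_i$ can be replaced by a single common witness. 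Transversality is never used in that step (it enters only the theorem statement), and no Gordan/Motzkin or minimax argument is given. So the paper does not overcome the difficulty you isolated; it passes over it.

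Moreover, your suspicion that this step can genuinely fail is correct, so the issue is a gap in the theorem itself, not merely in your attempt. Take $n=2$, $m=1$, $f(x)=(0,-x_2)^\T$, $g(x)=(1,0)^\T$, $\mathcal{U}=[-1,1]$, and let $S_1,S_2$ be the closed unit disks $h_1(x)=(x_1+1)^2+x_2^2-1$ and $h_2(x)=(x_1-\tfrac{1}{\sqrt2})^2+(x_2+\tfrac{1}{\sqrt2})^2-1$. Both sets are compact and each $h_i$ is a strict CBF: on $\partial S_i$ one has $\inf_{u\in\mathcal{U}}[L_f h_i + L_g h_i u] = L_f h_i - |L_g h_i|$, which a short computation shows is negative everywhere on each circle. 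The two circles meet at two points, at each of which the outward normals are linearly independent (equal-radius circles have anti-parallel normals only when tangent), so pairwise transversality holds. Yet at the origin, which lies in $S_I$ with both constraints active, the requirements read $2u<0$ and $-\sqrt2\,u<0$: no common $u$ exists, and in fact $\widehat K(0)=\{0\}$ has empty interior, contradicting the theorem's second conclusion. The failure mechanism is exactly the one you described: the Gordan certificate $w=\tfrac12\nabla h_1(0)+\tfrac{1}{\sqrt2}\nabla h_2(0)=(0,1)^\T$ is nonzero (so transversality is respected) but orthogonal to the range of $g(0)$, and $w^\T f(0)=0\geq 0$, so linear independence of the state-space gradients never gets transmitted to control space. (Perturbing to $f(x)=(0,\epsilon-x_2)^\T$ with small $\epsilon>0$ even destroys plain feasibility of \eqref{QP gen int} at the origin, since then the constraints force $v\leq 0$ and $v\geq\epsilon$.) Repairing the theorem requires an extra hypothesis of the kind you anticipated: full row rank of $g(x)$, transversality imposed on the projected gradients $L_g h_i(x)$ rather than on $\nabla h_i(x)$, or the joint strict-CBF condition $\inf_{u\in\mathcal{U}}\max_{i\in I(x)}[L_f h_i(x)+L_g h_i(x)u]<0$. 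Your instinct to scrutinize exactly this point of the author's argument was the right one.
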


\begin{proof}
Let $x\in \textnormal{int}(\bigcap S_{i})$. Then, we have that $h_i(x) \neq 0$ for all $i = 1,2, \dots, N_h$. Choose any $\bar v\in \mathcal U$ so that \eqref{C1 cont const} holds. Then, with this $\bar v$, define $\bar \delta_i = \frac{L_{f_i}h_{s_i} + L_{g_i}h_{s_i}v_i}{h_i}$, which is well-defined for all $x\in \textnormal{int}(\bigcap S_{i})$. Thus, we have that there exists a solution such that \eqref{C1 cont const}-\eqref{C3 safe const} holds, and so, the optimization problem \eqref{QP gen int} is feasible for all $x\in \textnormal{int}(\bigcap S_{i})$.

Now, with $\bar \delta_i$ defined as above, choose any $\hat \delta_i>\max\{0, \bar \sup_x|\delta_i(x)|\}$ so that $\hat \delta_i-\bar \delta_i>0$ for all $i\in 1, 2, \dots, N_h$. Then, with this choice of $\hat \delta_i$, we have that 
\begin{align*}
    L_{f_i}h_{s_i} + L_{g_i}h_{s_i}\bar v_i+\hat \delta_ih_i< L_{f_i}h_{s_i} + L_{g_i}h_{s_i}\bar v_i+\bar \delta_i h_i\leq 0,
\end{align*}
which implies that $(\bar v_i, \hat \delta_i)$ satisfy \eqref{C3 safe const} with strict inequality. Also, note that $\alpha_i(h_i) = \hat\delta_ih_i$ is an extended class $\mathcal K_\infty$ function for each $i = 1, 2, \dots, N_h$, since $\hat \delta_i> 0$. Thus, we have that there exists $u\in \mathcal U$ such that $L_fh_i + L_gh_iu<-\alpha_i(h_i)$, for any $x\in \textnormal{int}(\bigcap S_{i})$, and hence, $\widehat K(x)$ has a non-empty interior in that domain.

Next, we show that for any $x\in \bigcap \partial S_{i}$, the set $\widehat K$ has a non-empty interior. Under Assumption \ref{assume:necessary}, we have that there exists $u\in \mathcal U$ such that the inequalities \eqref{C3 safe const} strictly hold for all $i\in I(x)$ for all $x\in \bigcap \partial S_{i}$. For any $j\notin I(x)$, we have that $h_j(x)\neq 0$, and the analysis above guarantees that there exists a strict solution for \eqref{C3 safe const} for $j\notin I(x)$. 


Thus, we have that there exists a strict solution of \eqref{C3 safe const} for all $x\in \bigcap S_i$, this, $\textnormal{int}(\widehat K(x))$ is non-empty for all $x\in \bigcap S_i$. 
\end{proof}

From Theorem \ref{thm:last} we conclude that
when using strict control barrier functions
we can guarantee forward-invariance of multiple safe sets by solving the optimization problem \eqref{QP gen int} with additional non-negative slack variables in \eqref{C3 safe const}. By Theorem \ref{thm:last} the optimization problem is guaranteed to be feasible at all points $x \in \bigcap_{i=1}^{N_h} S_i$, and the non-emptiness of the set-valued map $\text{int}(\widehat{K}(x))$ is also guaranteed for all $x \in \bigcap_{i=1}^{N_h} S_i$, which in turn guarantees forward-invariance of the multiple safe sets.




\section{Conclusion}

In this paper we presented present a method to guarantee the forward invariance of composed sets using control barrier functions and incorporating input constraints. We demonstrated that control inputs rendering these sets invariant can be computed by solving a feasibility optimization problem. The computed control inputs are only required to be Lebesgue measurable and need not be continuous. We presented an optimization problem to compute these control inputs. Future work will incorporate more general control constraints and nonsmooth control barrier functions.

\section{Appendix}

\label{sec:diffincl}


A differential inclusion is a system with dynamics satisfying
\begin{align}
\label{eq:diffincl}
    \dot{x}(t) \in F(x)
\end{align}
where $F: \R^n \rarr \mathcal{P}(\R^n)$. 
Given a system $\dot{x} = f(t,x(t),u(t))$ where $u(t) \in \mathcal{U}(t,x)$ is a Lebesgue measurable function, all trajectories of the system can be considered simultaneously by defining the set-valued mapping $G : \R^n \rarr \mathcal{P}(\R^n)$ as
\begin{align}
\label{eq:inputdiffincl}
    G(t,x) = \{f(t,x,u) : u \in \mathcal{U}(t,x) \},
\end{align}
and considering the new differential inclusion $\dot{x}(t) \in G(t,x)$ \cite[Ch. 3, \S 15]{filippov2013differential}, \cite[Eq. (1.2)]{clarke1995qualitative} \cite[Ch. 10]{aubin2009set}.


\begin{define}
\label{def:locallyLip}
    Let $F : \R^n \rarr \mathcal{P}(\R^N)$ be a set-valued map. $F(x)$ is locally Lipschitz on a domain $D \subseteq \R^n$ if every point $x \in D$ admits a neighborhood $U = U(x)$ and a positive constant $L = L(x)$ such that 
    \begin{align}
        x_1, x_2 \in U \implies F(x_2) \subseteq F(x_1) + L\nrm{x_1 - x_2}B(0,1),
    \end{align}
    where $B(0,1)$ denotes the closed unit ball in $\R^n$. 
\end{define}
We point out that $F(x)$ being locally Lipschitz implies that $F(x)$ is upper semicontinuous \cite{cortes2008discontinuous}.


\begin{define}
    Let $S \subseteq \R^n$. The projection operator from $x_0 \in \R^n$ onto $S$ is defined as
    \begin{align}
        \prj_S(x_0) = \underset{x \in S}{\arg\min} \nrm{x - x_0}.
    \end{align}
\end{define}

\begin{define}[\cite{clarke2008nonsmooth}]
\label{def:NPS}
    The proximal normal cone of the set $S$ at $x$, denoted $N^P_S(x) \subseteq \R^n$, is defined as
    \begin{align*}
        N^P_S(x) = \{\theta (v-x) \in \R^n : \theta \geq 0,\ v \not\in S,\ \prj_S(v) = x \}.
    \end{align*}
    By convention, it always holds that $\{0\} \in N^P_S(s')$.
\end{define}

\begin{define}[\cite{clarke1995qualitative}]
\label{def:tangentcone}
    The Clarke tangent cone of the set $S$ at $x$, denoted $T_S(x)$, is defined as
    \begin{align}
    \label{eq:TS}
        T_S(x) = \brc{v \in \R^n : \underset{\substack{y \rarr x \\ t \downarrow 0}}{\lim\sup} \frac{d_S(y + tv) - d_S(y)}{t} \leq 0 }.
    \end{align}
\end{define}

\begin{lemma}
\label{lem:cones}
Let $S$ be defined as $S = \{x : h(x) \leq 0 \}$, where $h : \R^n \rarr \R$ is continuously differentiable. Then all of the following statements hold:
\begin{itemize}
    \item For all $x \in \textnormal{int}(S)$, $T_S(x) = \R^n$ and $N^P_S(x) = \{0\}$.
    \item For all $x \in \partial S$, the proximal normal cone satisfies
    \begin{align}
        N^P_S(x) = T_S(x)^{\circ} = \brc{\theta \frac{\partial h(x)}{\partial x} : \theta \geq 0},
    \end{align}
    i.e. $N^P_S(x)$ and $T_S(x)$ are polar to each other.
    \item For all $x \in \partial S$, the tangent cone $T_S(x)$ satisfies
    \begin{align}
    \label{eq:TSdef}
        T_S(x) = \brc{ v \in \R^n : \ip{v, \frac{\partial h(x)}{\partial x}} \leq 0 }.
    \end{align}
\end{itemize}
\end{lemma}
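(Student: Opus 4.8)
The plan is to treat interior and boundary points separately, and for boundary points to route everything through the proximal normal cone, recovering the Clarke tangent cone by polarity. Throughout I would use the fact that the strict-CBF hypothesis (Assumption \ref{assume:necessary}) forces $\frac{\partial h}{\partial x}(x) \neq 0$ for every $x \in \partial S$: if the gradient vanished then $L_f h(x) = 0$ and $L_g h(x) = 0$, making the infimum in \eqref{eq:necessary} equal to zero rather than strictly negative. This nonvanishing-gradient condition is exactly the constraint qualification that makes all three assertions hold, and it also underlies the identifications $\textnormal{int}(S) = \{h < 0\}$ and $\partial S = \{h=0\}$ in \eqref{eq:lotsofsets}.

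For the interior case, let $x \in \textnormal{int}(S)$, so that some ball $B(x,\eps) \subseteq S$. First I would verify $T_S(x) = \R^n$ directly from Definition \ref{def:tangentcone}: for any $v \in \R^n$, whenever $y$ is close enough to $x$ and $t$ small enough, both $y$ and $y+tv$ lie in $S$, so $d_S(y+tv) = d_S(y) = 0$ and the limsup in \eqref{eq:TS} is zero. For the proximal normal cone I would show $N^P_S(x) = \{0\}$ by contradiction: if $w = \theta(v-x) \neq 0$ with $v \notin S$ and $\prj_S(v) = x$, then $x + s(v-x)$ lies in $B(x,\eps) \subseteq S$ for small $s>0$ and satisfies $\nrm{x + s(v-x) - v} = (1-s)\nrm{x-v} < \nrm{x-v}$, contradicting that $x$ is the nearest point of $S$ to $v$.

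For a boundary point $x \in \partial S$, I would first compute the proximal normal cone. For the inclusion $N^P_S(x) \subseteq \brc{\theta \tfrac{\partial h}{\partial x}(x) : \theta \geq 0}$, note that if $\prj_S(v) = x$ with $v \notin S$ then $x$ minimizes $\tfrac12\nrm{y-v}^2$ subject to $h(y) \leq 0$; since $\frac{\partial h}{\partial x}(x) \neq 0$ is a constraint qualification and the constraint is active ($h(x)=0$), the first-order (KKT) conditions give $v - x = \mu\,\frac{\partial h}{\partial x}(x)$ for some $\mu \geq 0$, so every proximal normal is a nonnegative multiple of the gradient. For the reverse inclusion I would invoke the exterior-ball property guaranteed by $h \in \mathcal{C}^{1,1}_{loc}$ (Assumption \ref{assume:necessary}): this produces a single $v = x + s\,\frac{\partial h}{\partial x}(x) \notin S$, $s>0$, with $\prj_S(v) = x$, and scaling by $\theta \geq 0$ sweeps out the whole ray.

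Finally I would obtain the tangent-cone formula and the mutual-polarity claim. The cleanest route is to compute the contingent (Bouligand) cone by a first-order Taylor expansion of $h$: if $\ip{v,\frac{\partial h}{\partial x}}<0$ then $h(x+tv)<0$ for small $t$, so $v$ is tangent; if $\ip{v,\frac{\partial h}{\partial x}}>0$ the trajectory leaves $S$ and $v$ is excluded; and the boundary case $\ip{v,\frac{\partial h}{\partial x}}=0$ is recovered by perturbing inward along $-\frac{\partial h}{\partial x}(x)$. Because $S$ is a smooth sublevel set with nonvanishing gradient it is Clarke regular, so the Clarke tangent cone of Definition \ref{def:tangentcone} coincides with this contingent cone, giving $T_S(x) = \brc{v : \ip{v,\frac{\partial h}{\partial x}} \leq 0}$. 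The mutual-polarity statement then reduces to the elementary identity that the polar of a ray $\{\theta a : \theta \geq 0\}$ is the half-space $\{v : \ip{v,a} \leq 0\}$ and conversely, with $a = \frac{\partial h}{\partial x}(x)$. The main obstacle I anticipate is precisely the passage from the contingent cone to the Clarke tangent cone, i.e. establishing regularity of the smooth sublevel set; I expect to dispatch this by citing the standard characterization of tangent and normal cones for $\mathcal{C}^1$ inequality-constrained sets under the gradient constraint qualification in \cite{clarke2008nonsmooth}, rather than unwinding the limsup in \eqref{eq:TS} by hand.
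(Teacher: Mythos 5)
Your proposal is correct under the paper's standing assumptions, but it proves the lemma by a genuinely different route than the paper does. The paper's proof is top-down: it identifies $\partial S=\brc{x : h(x)=0}$ as a smooth manifold, cites \cite{clarke2008nonsmooth} for the coincidence of the proximal normal cone with the outward half of the manifold normal space, and then obtains the remaining claims from the abstract cone identities $T_S(x)^\circ = N_S(x)$, $N_S(x)=N^P_S(x)$ (when the latter is closed), and $T_S(x)=N_S(x)^\circ$. You work bottom-up: the interior case directly from Definition \ref{def:tangentcone} plus a strict-decrease contradiction for the projection; the inclusion $N^P_S(x)\subseteq\brc{\theta \nabla h(x) : \theta\geq 0}$ by reading $\prj_S(v)=x$ as a KKT point of the constrained projection problem; the reverse inclusion from the exterior-ball property of $\mathcal{C}^{1,1}$ sublevel sets; and \eqref{eq:TSdef} by computing the contingent cone and citing regularity, after which the polarity statement reduces to the elementary duality between a ray and a half-space. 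Your route is longer, but it makes explicit two hypotheses that the paper's proof uses silently and that are genuinely necessary. First, the constraint qualification $\nabla h(x)\neq 0$ on $\partial S$, which you correctly extract from the strict-CBF condition \eqref{eq:necessary}: without it the lemma is false as literally stated (take $h(x)=\nrm{x}^2$, so $S=\brc{0}$ and $T_S(0)=\brc{0}$, while \eqref{eq:TSdef} would give $\R^n$). Second, the role of $h\in\mathcal{C}^{1,1}_{loc}$ from Assumption \ref{assume:necessary}: for a merely $\mathcal{C}^1$ boundary the proximal normal cone can collapse to $\brc{0}$ even when the gradient does not vanish (e.g. $h(x_1,x_2)=x_2-\abs{x_1}^{3/2}$ at the origin), so the paper's appeal to the smooth-manifold normal space --- and hence the equality $N^P_S(x)=\brc{\theta\nabla h(x):\theta\geq 0}$ --- needs exactly the extra regularity your exterior-ball step isolates. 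What the paper's approach buys in exchange is brevity: all the geometry is deferred to quoted results, whereas your KKT and Taylor-expansion steps must still, in the end, lean on a cited regularity theorem for the contingent-to-Clarke identification, so neither proof is fully self-contained on that point.
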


\begin{proof}
    The first statement trivially holds for $N^P_S(x)$ since $\prj(u)$ for $u \not\in S$ will always yield a point on the boundary $\partial S$ \cite[p. 22]{clarke2008nonsmooth}. In addition, $T_S(x) = \R^n$ $\forall x \in \textnormal{int}(S)$ follows from \eqref{eq:TS} and noting that for all such $x$ there exists an open neighborhood $U(x)$ with $x \in U(x) \in S$. 
    
    Consider all $x \in \partial S$.
    In smooth manifolds $N^P_S(x)$ coincides with the normal space \cite[p. 9]{clarke2008nonsmooth}. Since $h$ is continuously differentiable, the boundary $\partial S = \{x : h(x) = 0\}$ is a smooth manifold with normal space $\{\hat{\theta} \frac{\partial h(x)}{\partial x},\ \hat{\theta} \in \R \}$. It follows from Definition \ref{def:NPS} that only the vectors $\{ \theta \frac{\partial h(x)}{\partial x} : \theta \geq 0 \}$ which point outside the set $S$ are in the proximal normal cone to $S$. To show that $N^P_S(x) = T_S(x)^{\circ}$ note that $T_S(x)^{\circ} = N_S(x)$ \cite[Ch. 5]{clarke2008nonsmooth} and in a finite dimensional Hilbert space we have $N_S(x) = N^P_S(x)$ when $N^P_S(x)$ is closed \cite[Theorem 6.1]{clarke2008nonsmooth}.
    
    Finally, the characterization of $T_S(x)$ in \eqref{eq:TSdef} follows from the fact that $T_S(x) = N_S(x)^{\circ}$ \cite[Prop. 5.4]{clarke2008nonsmooth}, which in this case satisfies $N_S(x)^{\circ} = N^P_S(x)^{\circ}$.
\end{proof}

\begin{lemma}
\label{lem:setmult}
    Let $v \in \R^{m \times p}$. Then
    $\bar{B}^{n \times m}(0,1) v \subseteq \nrm{v} \bar{B}^{n \times p}(0,1)$. 
\end{lemma}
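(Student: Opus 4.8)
The plan is to unwind the definitions and reduce everything to the sub-multiplicative property of the matrix norm. First I would record the relevant characterization of the closed unit ball: since $\bar{B}^{n\times m}(0,1) = \cvxc(B^{n\times m}(0,1))$ is the closed convex hull of the open norm ball, and norm balls are convex, it coincides with the norm sublevel set $\{A \in \R^{n\times m} : \nrm{A} \leq 1\}$. Likewise, the scaled ball $\nrm{v}\bar{B}^{n\times p}(0,1)$ is exactly $\{M \in \R^{n\times p} : \nrm{M} \leq \nrm{v}\}$.

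Next I would take an arbitrary element of the left-hand set. By the definition of the set-valued matrix product given in Section \ref{sec:notation}, treating the single matrix $v$ as the singleton $\{v\}$, every element of $\bar{B}^{n\times m}(0,1)\, v$ has the form $Av$ for some $A \in \R^{n\times m}$ with $\nrm{A}\leq 1$.

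The key (and essentially only) step is then to invoke sub-multiplicativity of $\nrm{\cdot}$:
\begin{align*}
    \nrm{Av} \leq \nrm{A}\,\nrm{v} \leq \nrm{v},
\end{align*}
where the final inequality uses $\nrm{A}\leq 1$. Hence $Av \in \nrm{v}\bar{B}^{n\times p}(0,1)$ by the characterization above. Since $Av$ was an arbitrary element of the product set, the claimed containment $\bar{B}^{n\times m}(0,1)\, v \subseteq \nrm{v}\bar{B}^{n\times p}(0,1)$ follows.

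I do not anticipate any genuine obstacle here: the result is a one-line consequence of the sub-multiplicativity assumed of $\nrm{\cdot}$, and the statement is really a convenient packaging of that property in set-valued form for use in the proof of Theorem \ref{thm:One}. The only point requiring a moment's care is the routine identification of $\cvxc(B(0,1))$ with the closed norm ball $\{A : \nrm{A}\leq 1\}$, which is immediate from the convexity of norm sublevel sets.
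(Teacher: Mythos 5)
Your proof is correct and follows essentially the same route as the paper's: both are direct element-chasing arguments whose only substantive step is sub-multiplicativity, $\nrm{Av} \leq \nrm{A}\nrm{v}$. The only cosmetic difference is that the paper normalizes $v$ (writing $uv = \nrm{v}\,u\hat{v}$ with $\hat{v} = v/\nrm{v}$, which forces a separate $v = \bm 0$ case), whereas your sublevel-set characterization of the scaled ball handles $v = \bm 0$ uniformly.
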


\begin{proof}
    The equality clearly holds when $v = \bm 0$.
    Choose any $u \in \bar{B}^{n \times m}(0,1)$, and note that $u v \in \R^{n \times p}$.
    Define $\hat{v} = \frac{v}{\nrm{v}}$ for $v \neq \bm 0$.
    Then $uv = \nrm{v} u \hat{v}$. Note that $u \hat{v} \in \R^{n \times p}$ and $\nrm{u \hat{v}} \leq \nrm{u} \nrm{\hat{v}} = 1$, implying that $u \hat{v} \in \bar{B}^{n \times p}(0,1)$ and therefore $uv = \nrm{v} u \hat{v} \in \nrm{v} \bar{B}^{n \times p}(0,1)$. This implies $\bar{B}^{n \times m}(0,1) v \subseteq \nrm{v} \bar{B}^{n \times p}(0,1)$.
\end{proof}

\begin{lemma}
\label{lem:setmatrixprod}
    Let $S \subset \R^{m \times p}$. Then $\bar{B}^{{n \times m}}(0,1)S \subseteq \nrm{v_{\max}} \bar{B}^{n \times p}(0,1)$, where $v_{\max} = \pth{\arg\sup_{v \in S} \nrm{v}}$.
\end{lemma}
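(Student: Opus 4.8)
The plan is to reduce this set-valued statement to the single-element result already established in Lemma \ref{lem:setmult}. First I would take an arbitrary element of the set-valued product $\bar{B}^{n \times m}(0,1) S$; by the definition of the set-valued matrix product given in Section \ref{sec:notation}, any such element has the form $uv$ with $u \in \bar{B}^{n \times m}(0,1)$ and $v \in S$. Applying Lemma \ref{lem:setmult} to this fixed $v$ immediately gives $uv \in \bar{B}^{n \times m}(0,1)\, v \subseteq \nrm{v}\, \bar{B}^{n \times p}(0,1)$.

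The second step is to replace the element-dependent radius $\nrm{v}$ by the uniform radius $\nrm{v_{\max}}$. By the definition $v_{\max} = \arg\sup_{v \in S} \nrm{v}$ we have $\nrm{v} \leq \nrm{v_{\max}}$ for every $v \in S$. Since $\bar{B}^{n \times p}(0,1)$ is centered at the origin, scaling by a nonnegative factor is monotone, i.e. $0 \leq r_1 \leq r_2$ implies $r_1 \bar{B}^{n \times p}(0,1) \subseteq r_2 \bar{B}^{n \times p}(0,1)$. Combining these facts yields $uv \in \nrm{v}\, \bar{B}^{n \times p}(0,1) \subseteq \nrm{v_{\max}}\, \bar{B}^{n \times p}(0,1)$. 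As the element $uv$ was arbitrary, the claimed inclusion $\bar{B}^{n \times m}(0,1) S \subseteq \nrm{v_{\max}} \bar{B}^{n \times p}(0,1)$ follows.

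There is essentially no hard step here; the lemma is just a packaging of Lemma \ref{lem:setmult} together with a monotonicity-of-balls observation. The one technical point worth keeping in mind is the well-definedness of $v_{\max}$: the supremum $\sup_{v \in S} \nrm{v}$ must be finite, and for the $\arg\sup$ notation to be literally meaningful it must be attained. In the intended application (the proof of Theorem \ref{thm:One}) the set $S = K_{\gamma}(x_1)$ is compact, so the supremum is attained and $v_{\max}$ exists; more generally it suffices that $S$ be bounded, in which case I would simply read $\nrm{v_{\max}}$ as the finite value $\sup_{v \in S} \nrm{v}$ and the argument above goes through unchanged.
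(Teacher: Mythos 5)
Your proof is correct and follows essentially the same route as the paper's: both reduce the claim to Lemma \ref{lem:setmult} (the paper by writing $\bar{B}^{n \times m}(0,1)S$ as the union $\bigcup_{v \in S}\bar{B}^{n \times m}(0,1)v$, you by taking an arbitrary element $uv$) and then conclude via $\nrm{v} \leq \nrm{v_{\max}}$ and monotonicity of scaled balls. Your closing remark on the well-definedness of $v_{\max}$ (attainment/finiteness of the supremum, which the paper leaves implicit and which holds in the intended application since $K_\gamma(x_1)$ is compact) is a sensible refinement but does not change the argument.
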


\begin{proof}
    By the definition of the set-valued matrix product (Section \ref{sec:notation}), we have
    \begin{align*}
        \bar{B}^{{n \times m}}(0,1)S &= \{uv \in \R^{n \times p} : u \in  \bar{B}^{{n \times m}}(0,1),\ v \in S\}
    \end{align*}
    Using Lemma \ref{lem:setmult}, this can equivalently be written as
    \begin{align}
        \bar{B}^{{n \times m}}(0,1)S &= \bigcup_{v \in S} \bar{B}^{{n \times m}}(0,1) v \\
        &\subseteq \bigcup_{v \in S} \nrm{v} \bar{B}^{{n \times p}}(0,1) \label{eq:cupnorm}
    \end{align}
    Let $v_{\max} = \pth{\arg\sup_{v \in S} \nrm{v}}$. Then it holds that $\nrm{v} \bar{B}^{n \times p}(0,1) \subseteq \nrm{v_{\max}} \bar{B}^{n \times p}(0,1)$ for all $v \in S$, which with \eqref{eq:cupnorm} implies that $\bar{B}^{{n \times m}}(0,1)S \subseteq \nrm{v_{\max}} \bar{B}^{n \times p}(0,1)$.
\end{proof}

\begin{corr}
    The following holds:
    \begin{align}
        \bar{B}^{n \times m}(0,1) \bar{B}^{m \times p}(0,1) \subseteq \bar{B}^{n \times p}(0,1)
    \end{align}
\end{corr}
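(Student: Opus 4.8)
The plan is to obtain this as an immediate specialization of Lemma \ref{lem:setmatrixprod}. I would set $S = \bar{B}^{m \times p}(0,1)$, which is a subset of $\R^{m \times p}$ as required by that lemma's hypothesis, and then simply invoke the inclusion $\bar{B}^{n \times m}(0,1) S \subseteq \nrm{v_{\max}} \bar{B}^{n \times p}(0,1)$ with $v_{\max} = \pth{\arg\sup_{v \in S} \nrm{v}}$.

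The only remaining work is to evaluate $\nrm{v_{\max}}$ for this particular choice of $S$. Since $S$ is precisely the closed unit ball $\bar{B}^{m \times p}(0,1)$, every $v \in S$ satisfies $\nrm{v} \leq 1$, and this bound is attained on the boundary of the ball; hence $\sup_{v \in S} \nrm{v} = 1$, so $\nrm{v_{\max}} = 1$. Substituting this into the inclusion from Lemma \ref{lem:setmatrixprod} gives
\begin{align*}
    \bar{B}^{n \times m}(0,1) \bar{B}^{m \times p}(0,1) \subseteq \nrm{v_{\max}} \bar{B}^{n \times p}(0,1) = \bar{B}^{n \times p}(0,1),
\end{align*}
which is exactly the claimed statement.

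I expect no real obstacle here, as the corollary is essentially a direct reading of the preceding lemma. The one point worth a moment's care is confirming that the supremum defining $v_{\max}$ is actually attained (so that $v_{\max}$ is a well-defined element of $S$ rather than merely a limiting value); this follows because $\bar{B}^{m \times p}(0,1)$ is compact and the norm is continuous, but in fact attainment is not even needed for the inclusion, since one only requires $\sup_{v \in S} \nrm{v} = 1$ as the scaling factor. Thus the proof reduces to the single substitution described above.
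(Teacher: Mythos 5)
Your proposal is correct and follows exactly the paper's own argument: the paper proves this corollary by invoking Lemma \ref{lem:setmatrixprod} with $S = \bar{B}^{m \times p}(0,1)$ and noting that $\sup_{v \in \bar{B}^{m \times p}(0,1)} \nrm{v} = 1$. Your additional remark that attainment of the supremum is not actually needed (only the value of the supremum matters) is a fine observation but does not change the substance.
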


\begin{proof}
    Follows directly from Lemma \ref{lem:setmatrixprod} by noting that $\sup_{v \in \bar{B}^{m \times p}(0,1)} \nrm{v} = 1$.
\end{proof}

\begin{prop}[Adapted from {\cite[Prop. 2.14]{freeman2008robust}}]
\label{prop:Freeman}
    Let $G : \R^n \rarr \mathcal{P}(\R^m)$ be locally Lipschitz continuous with nonempty, compact, convex values. Let $\phi : \R^n \times \R^m \rarr \R$ be locally Lipschitz continuous and such that the mapping $z \mapsto \phi(x,z)$ is convex for each fixed $x \in \R^n$. Then $F : \R^n \rarr \mathcal{P}(\R^m)$ defined by $F(x) = \{z \in G(x) : \phi(x,z) < 0 \}$ is locally Lipschitz continuous on $\textup{Dom}(F)$.
\end{prop}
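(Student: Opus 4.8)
The plan is to verify the set-valued Lipschitz inclusion of Definition \ref{def:locallyLip} directly: for a fixed $x_0 \in \textup{Dom}(F)$ I will produce a neighborhood $U$ and a constant $L$ so that $F(x_2) \subseteq F(x_1) + L\nrm{x_1 - x_2} B(0,1)$ for all $x_1, x_2 \in U$, after which swapping $x_1$ and $x_2$ yields the two-sided estimate. The essential difficulty is that a point $z_2 \in F(x_2)$ may satisfy $\phi(x_2, z_2) < 0$ with slack arbitrarily small, so the nearby point of $G(x_1)$ supplied by Lipschitzness of $G$ need not be strictly feasible for $\phi(x_1, \cdot)$. The whole argument is built around repairing this loss of strict feasibility at a cost proportional to $\nrm{x_1 - x_2}$.

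First I would fix a \emph{strictly feasible center}. Since $x_0 \in \textup{Dom}(F)$ there is $z_0 \in G(x_0)$ with $\phi(x_0, z_0) = -2\eta < 0$. Using local Lipschitzness of $G$ to obtain a point of $G(x)$ within $O(\nrm{x - x_0})$ of $z_0$, together with local Lipschitzness of $\phi$, I would shrink $U$ about $x_0$ so that for every $x \in U$ there exists $c(x) \in G(x)$ with $\phi(x, c(x)) \leq -\eta$. Local boundedness of $G$ (a consequence of local Lipschitzness and compact values) furnishes a uniform diameter bound $\textup{diam}\,G(x) \leq D$ over $U$; the existence of $c(x)$ also shows $U \subseteq \textup{Dom}(F)$, so the inclusion to be proved is meaningful.

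Next comes the core construction. Fix $x_1, x_2 \in U$, write $\rho = \nrm{x_1 - x_2}$, and take any $z_2 \in F(x_2)$; if $\rho = 0$ then $z_2 \in F(x_1)$ already, so assume $\rho > 0$. By Lipschitzness of $G$ I choose $w \in G(x_1)$ with $\nrm{w - z_2} \leq L_G \rho$, and combining $\phi(x_2, z_2) < 0$ with the Lipschitz bound on $\phi$ gives $\phi(x_1, w) \leq c_1 \rho$ with $c_1 = L_\phi(1 + L_G)$ (possibly positive). I then set
\begin{align*}
    z_1 = (1 - \lambda) w + \lambda\, c(x_1), \qquad \lambda = \frac{c_1 \rho}{\eta},
\end{align*}
shrinking $U$ so that $\rho \leq \eta/c_1$ forces $\lambda \in [0,1]$. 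Convexity of the value $G(x_1)$ gives $z_1 \in G(x_1)$, while convexity of $\phi(x_1, \cdot)$ and $\phi(x_1, c(x_1)) \leq -\eta$ give $\phi(x_1, z_1) \leq (1-\lambda)c_1\rho - \lambda\eta = -\lambda c_1 \rho < 0$, so $z_1 \in F(x_1)$. Using $\textup{diam}\,G(x_1) \leq D$ to bound $\nrm{c(x_1) - w}$, the displacement satisfies
\begin{align*}
    \nrm{z_1 - z_2} \leq \lambda \nrm{c(x_1) - w} + \nrm{w - z_2} \leq \pth{\tfrac{c_1 D}{\eta} + L_G} \nrm{x_1 - x_2}.
\end{align*}

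Setting $L = c_1 D/\eta + L_G$, which is independent of the chosen $z_2$, this shows $z_2 \in F(x_1) + L \nrm{x_1 - x_2} B(0,1)$; since $z_2 \in F(x_2)$ was arbitrary the required inclusion holds, and interchanging $x_1$ and $x_2$ completes the proof that $F$ is locally Lipschitz on $\textup{Dom}(F)$. I expect the only genuine obstacle to be the strict-feasibility repair in the core step — specifically, verifying that the convex-combination parameter $\lambda$ can be taken $O(\rho)$ while simultaneously restoring $\phi(x_1, \cdot) < 0$ and keeping $z_1$ inside the convex set $G(x_1)$; the remaining steps are bookkeeping with the Lipschitz and boundedness constants.
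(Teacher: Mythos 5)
Your proof is correct, but note that the paper does not actually prove this proposition: it is imported from \cite[Prop.~2.14]{freeman2008robust} and restated in the appendix purely for convenience, so there is no internal argument to compare against. Your write-up is in effect a self-contained reconstruction of the cited result, and the mechanism you use --- fix a uniformly strictly feasible selection $c(x)$ with $\phi(x,c(x)) \leq -\eta$ on a shrunken neighborhood $U$, pull $z_2 \in F(x_2)$ back to $w \in G(x_1)$ via Lipschitzness of $G$, and repair the possible loss of strict feasibility by the convex combination $z_1 = (1-\lambda)w + \lambda c(x_1)$ with $\lambda = c_1\rho/\eta = O(\nrm{x_1 - x_2})$ --- is exactly the standard route, and it invokes convexity of the values $G(x_1)$ and of $\phi(x_1,\cdot)$ in precisely the two places where they are indispensable; the arithmetic $\phi(x_1,z_1) \leq (1-\lambda)c_1\rho - \lambda\eta = -\lambda c_1\rho$ and the displacement bound $\nrm{z_1 - z_2} \leq \lambda D + L_G\rho$ both check out. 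Three small points deserve a sentence in a polished version. First, since $z_2$ ranges over all of $G(x_2)$ and not merely near the center $z_0$, the constant $L_\phi$ must be a \emph{uniform} Lipschitz constant for $\phi$ on a compact neighborhood of $\{x_0\} \times G(x_0)$ (fattened by $L_G \epsilon$); this follows from the standard fact that a locally Lipschitz function is Lipschitz on compact sets, and fixing $D$ and $L_\phi$ on a preliminary ball before shrinking the radius below $\eta/(2c_1)$ also dissolves the apparent circularity between the size of $U$ and the constant $c_1$. Second, the strict inequality $\phi(x_1,z_1) \leq -\lambda c_1 \rho < 0$ needs $c_1 > 0$; this is harmless (one may always enlarge $L_\phi$, and if $\phi$ were locally constant then $F$ and $G$ coincide near $x_0$), but it should be said. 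Third, under the paper's Definition \ref{def:locallyLip} the single inclusion $F(x_2) \subseteq F(x_1) + L\nrm{x_1 - x_2}B(0,1)$ is already quantified over all ordered pairs $x_1, x_2 \in U$, and your constants do not depend on the ordering, so the concluding ``swap $x_1$ and $x_2$'' step is redundant rather than missing. With these clarifications the argument is complete and constitutes a legitimate proof of the proposition the paper only cites.
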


\begin{lemma}
    \label{lem:twolocalLipschitz}
    Consider a function $f(x_1,x_2)$ with $f : \R^n \times \R^m \rarr \R$ and let $D_1 \subset \R^n$, $D_2 \subset \R^m$ be open subsets. Suppose that for all fixed $x_1 \in D_1$ the function $f(x_1,\cdot)$ is locally Lipschitz on $D_2$, and for all fixed $x_2 \in D_2$ the function $f(\cdot,x_2)$ is locally Lipschitz on $\R^n$. Then $f$ is locally Lipschitz on $D_1 \times D_2$.
\end{lemma}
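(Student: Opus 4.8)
The plan is to verify the definition of joint local Lipschitzness directly: I fix an arbitrary base point $(x_1^0,x_2^0)\in D_1\times D_2$ and construct a single product neighborhood $W_1\times W_2$ and a single constant $L$ on which the estimate $|f(p_1,p_2)-f(q_1,q_2)|\le L\,\nrm{(p_1,p_2)-(q_1,q_2)}$ holds. The workhorse is the telescoping decomposition
\[ |f(p_1,p_2) - f(q_1,q_2)| \le |f(p_1,p_2) - f(q_1,p_2)| + |f(q_1,p_2) - f(q_1,q_2)|, \]
which splits the joint increment into one pure $x_1$-increment evaluated at the frozen second argument $p_2$, and one pure $x_2$-increment evaluated at the frozen first argument $q_1$. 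The first term is to be controlled by the hypothesis that each slice $f(\cdot,x_2)$ is locally Lipschitz on $\R^n$, the second by the hypothesis that each slice $f(x_1,\cdot)$ is locally Lipschitz on $D_2$. Using $\nrm{p_1-q_1},\nrm{p_2-q_2}\le\nrm{(p_1,p_2)-(q_1,q_2)}$, combining the two bounds yields the claim with $L$ the sum of the two directional constants.

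Concretely, I would first invoke the separate hypotheses at the base point: local Lipschitzness of $f(\cdot,x_2^0)$ at $x_1^0$ supplies a ball $W_1\ni x_1^0$ and a constant $L_a$, and local Lipschitzness of $f(x_1^0,\cdot)$ at $x_2^0$ supplies a ball $W_2\ni x_2^0$ and a constant $L_b$. I would then shrink $W_1,W_2$ so that their closures are compact and contained in $D_1,D_2$ respectively, and take $W_1\times W_2$ as the candidate neighborhood. Setting $L=L_a+L_b$ would complete the estimate \emph{provided} the two directional bounds are legitimate on this common neighborhood.

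The main obstacle, and the only nonroutine point, is precisely that the decomposition requires the Lipschitz constant of the slice $f(\cdot,p_2)$ to be valid not merely at $p_2=x_2^0$ but \emph{uniformly} for every $p_2\in W_2$ (and symmetrically $f(q_1,\cdot)$ uniformly over $q_1\in W_1$). The bare hypotheses give, for each frozen value, a constant that may a priori depend on that value, and nothing in the statement forces these slice-wise constants to remain bounded as the frozen variable ranges over the neighborhood. I would close this gap by establishing that the slice-wise Lipschitz modulus is \emph{locally bounded} in the frozen variable: covering the compact set $\overline{W_1}$ by finitely many of the supplied Lipschitz neighborhoods and taking the maximum of the associated constants produces a single $L_a$ valid on all of $W_1$, and symmetrically for $L_b$ on $W_2$. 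I expect this uniform-boundedness step to be where genuine care is needed, since without it the conclusion can fail. In the paper's intended use this uniformity is automatic: each $\phi_j$ is \emph{affine} in $u$, so its Lipschitz modulus in $u$ equals $\nrm{A_{S,j}(x)}$, a continuous and hence locally bounded function of $x$, which delivers exactly the required uniformity of the slice-wise constant.
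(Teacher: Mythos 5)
Your telescoping decomposition is the natural one, and you have put your finger on exactly the right crux (uniformity of the slice-wise constants), but the step you propose to close it with does not work --- and cannot work, because the lemma is false as stated (the paper, in fact, asserts it without any proof). Your covering argument controls the wrong variable: for one frozen value $x_2^0$, the hypothesis supplies Lipschitz neighborhoods of the single slice $f(\cdot,x_2^0)$, and a finite subcover of $\overline{W_1}$ by these yields a uniform constant \emph{for that one slice only}. What your estimate needs is
\begin{align*}
\sup_{p_2 \in W_2} \, \mathrm{Lip}\pth{f(\cdot,p_2);\,W_1} < \infty ,
\end{align*}
a bound over uncountably many different slices, each coming with its own neighborhoods and constants; compactness of $\overline{W_1}$ gives no handle on this supremum, since the hypotheses provide no joint neighborhoods and the slice-wise modulus need not be locally bounded in the frozen variable. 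Indeed it can blow up: take $n=m=1$, $D_1=D_2=\R$, and $f(x,y)=xy/(x^2+y^2)$ with $f(0,0)=0$. Every slice $f(\cdot,y)$ and $f(x,\cdot)$ is either identically zero or smooth on all of $\R$, so both hypotheses of the lemma hold, yet $f(t,t)=1/2$ for $t\neq 0$ while $f(0,0)=0$, so $f$ is not even jointly continuous at the origin, let alone locally Lipschitz. Concretely, $\mathrm{Lip}\pth{f(\cdot,y);(-\eps,\eps)} \geq 1/\abs{y} \rarr \infty$ as $y \rarr 0$, which is precisely the unboundedness your covering step would have to rule out but cannot.

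Your closing remark, however, is the correct repair for the way the lemma is actually used in the paper. In Lemma \ref{lem:KLips} each $\phi_j$ is affine in $u$, so its Lipschitz modulus in $u$ equals $\nrm{A_{S,j}(x)}$, which is locally bounded since $A_{S,j}$ is locally Lipschitz; and for $u$ ranging in a bounded set the $x$-increment is controlled by the local Lipschitz constants of $A_{S,j}$ and $b_{S,j}$. With that extra hypothesis --- the slice-wise Lipschitz constant is locally bounded in the frozen variable, which holds automatically in the affine case --- your telescoping argument goes through verbatim and gives a correct proof. The lemma should therefore be restated with this hypothesis added (or stated only for functions affine in $x_2$ with locally Lipschitz coefficients); as written, no proof of it can exist.
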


\bibliographystyle{IEEEtran}
\bibliography{KCBF.bib}

\begin{thebibliography}{10}
\providecommand{\url}[1]{#1}
\csname url@samestyle\endcsname
\providecommand{\newblock}{\relax}
\providecommand{\bibinfo}[2]{#2}
\providecommand{\BIBentrySTDinterwordspacing}{\spaceskip=0pt\relax}
\providecommand{\BIBentryALTinterwordstretchfactor}{4}
\providecommand{\BIBentryALTinterwordspacing}{\spaceskip=\fontdimen2\font plus
\BIBentryALTinterwordstretchfactor\fontdimen3\font minus
  \fontdimen4\font\relax}
\providecommand{\BIBforeignlanguage}[2]{{%
\expandafter\ifx\csname l@#1\endcsname\relax
\typeout{** WARNING: IEEEtran.bst: No hyphenation pattern has been}%
\typeout{** loaded for the language `#1'. Using the pattern for}%
\typeout{** the default language instead.}%
\else
\language=\csname l@#1\endcsname
\fi
#2}}
\providecommand{\BIBdecl}{\relax}
\BIBdecl

\bibitem{romdlony2016stabilization}
M.~Z. Romdlony and B.~Jayawardhana, ``Stabilization with guaranteed safety
  using control {L}yapunov-barrier function,'' \emph{Automatica}, vol.~66, pp.
  39--47, 2016.

\bibitem{barry2012safety}
A.~J. Barry, A.~Majumdar, and R.~Tedrake, ``Safety verification of reactive
  controllers for {UAV} flight in cluttered environments using barrier
  certificates,'' in \emph{International Conference on Robotics and
  Automation}.\hskip 1em plus 0.5em minus 0.4em\relax IEEE, 2012, pp. 484--490.

\bibitem{ames2014control}
A.~D. Ames, J.~W. Grizzle, and P.~Tabuada, ``Control barrier function based
  quadratic programs with application to adaptive cruise control,'' in
  \emph{53rd Conference on Decision and Control}.\hskip 1em plus 0.5em minus
  0.4em\relax IEEE, 2014, pp. 6271--6278.

\bibitem{ames2019control}
A.~D. Ames, S.~Coogan, M.~Egerstedt, G.~Notomista, K.~Sreenath, and P.~Tabuada,
  ``Control barrier functions: Theory and applications,'' in \emph{2019 18th
  European Control Conference (ECC)}.\hskip 1em plus 0.5em minus 0.4em\relax
  IEEE, 2019, pp. 3420--3431.

\bibitem{ames2017control}
A.~D. Ames, X.~Xu, J.~W. Grizzle, and P.~Tabuada, ``Control barrier function
  based quadratic programs for safety critical systems,'' \emph{IEEE
  Transactions on Automatic Control}, vol.~62, no.~8, pp. 3861--3876, 2017.

\bibitem{li2018formally}
A.~Li, L.~Wang, P.~Pierpaoli, and M.~Egerstedt, ``Formally correct composition
  of coordinated behaviors using control barrier certificates,'' in
  \emph{IEEE/RSJ International Conference on Intelligent Robots and
  Systems}.\hskip 1em plus 0.5em minus 0.4em\relax IEEE, 2018, pp. 3723--3729.

\bibitem{glotfelter2017nonsmooth}
P.~Glotfelter, J.~Cort{\'e}s, and M.~Egerstedt, ``Nonsmooth barrier functions
  with applications to multi-robot systems,'' \emph{IEEE control systems
  letters}, vol.~1, no.~2, pp. 310--315, 2017.

\bibitem{glotfelter2018boolean}
------, ``Boolean composability of constraints and control synthesis for
  multi-robot systems via nonsmooth control barrier functions,'' in \emph{2018
  IEEE Conference on Control Technology and Applications (CCTA)}.\hskip 1em
  plus 0.5em minus 0.4em\relax IEEE, 2018, pp. 897--902.

\bibitem{fiacco1976sensitivity}
A.~V. Fiacco, ``Sensitivity analysis for nonlinear programming using penalty
  methods,'' \emph{Mathematical Programming}, vol.~10, no.~1, pp. 287--311,
  1976.

\bibitem{robinson1974perturbed}
S.~M. Robinson, ``Perturbed {K}uhn-{T}ucker points and rates of convergence for
  a class of nonlinear-programming algorithms,'' \emph{Mathematical
  Programming}, vol.~7, no.~1, pp. 1--16, 1974.

\bibitem{xu2015robustness}
X.~Xu, P.~Tabuada, J.~W. Grizzle, and A.~D. Ames, ``Robustness of control
  barrier functions for safety critical control,'' \emph{IFAC-PapersOnLine},
  vol.~48, no.~27, pp. 54--61, 2015.

\bibitem{clarke1995qualitative}
P.~Clarke, Y.~S. Ledyaev, R.~Stern, and P.~Wolenski, ``Qualitative properties
  of trajectories of control systems: a survey,'' \emph{Journal of dynamical
  and control systems}, vol.~1, no.~1, pp. 1--48, 1995.

\bibitem{aubin2009set}
J.-P. Aubin and H.~Frankowska, \emph{Set-valued analysis}.\hskip 1em plus 0.5em
  minus 0.4em\relax Springer Science \& Business Media, 2009.

\bibitem{clarke2008nonsmooth}
F.~H. Clarke, Y.~S. Ledyaev, R.~J. Stern, and P.~R. Wolenski, \emph{Nonsmooth
  analysis and control theory}.\hskip 1em plus 0.5em minus 0.4em\relax Springer
  Science \& Business Media, 2008, vol. 178.

\bibitem{cortes2008discontinuous}
J.~Cortes, ``Discontinuous dynamical systems,'' \emph{IEEE Control systems
  magazine}, vol.~28, no.~3, pp. 36--73, 2008.

\bibitem{emam2019robust}
Y.~Emam, P.~Glotfelter, and M.~Egerstedt, ``Robust barrier functions for a
  fully autonomous, remotely accessible swarm-robotics testbed,'' \emph{arXiv
  preprint arXiv:1909.02966}, 2019.

\bibitem{ames2016control}
A.~D. Ames, X.~Xu, J.~W. Grizzle, and P.~Tabuada, ``Control barrier function
  based quadratic programs for safety critical systems,'' \emph{IEEE
  Transactions on Automatic Control}, vol.~62, no.~8, pp. 3861--3876, 2016.

\bibitem{cortez2019control}
W.~S. Cortez, D.~Oetomo, C.~Manzie, and P.~Choong, ``Control barrier functions
  for mechanical systems: Theory and application to robotic grasping,''
  \emph{IEEE Transactions on Control Systems Technology}, 2019.

\bibitem{ames2014rapidly}
A.~D. Ames, K.~Galloway, K.~Sreenath, and J.~W. Grizzle, ``Rapidly
  exponentially stabilizing control {L}yapunov functions and hybrid zero
  dynamics,'' \emph{IEEE Transactions on Automatic Control}, vol.~59, no.~4,
  pp. 876--891, 2014.

\bibitem{garg2019prescribed}
K.~Garg, E.~Arabi, and D.~Panagou, ``Fixed-time control under spatiotemporal
  and input constraints: A qp based approach,'' \emph{arXiv preprint
  arXiv:1906.10091}, 2019.

\bibitem{filippov2013differential}
A.~F. Filippov, \emph{Differential equations with discontinuous righthand
  sides: control systems}.\hskip 1em plus 0.5em minus 0.4em\relax Springer
  Science \& Business Media, 2013, vol.~18.

\bibitem{freeman2008robust}
R.~Freeman and P.~V. Kokotovic, \emph{Robust nonlinear control design:
  state-space and Lyapunov techniques}.\hskip 1em plus 0.5em minus 0.4em\relax
  Springer Science \& Business Media, 2008.

\bibitem{boyd2004convex}
S.~Boyd, S.~P. Boyd, and L.~Vandenberghe, \emph{Convex optimization}.\hskip 1em
  plus 0.5em minus 0.4em\relax Cambridge university press, 2004.

\end{thebibliography}

\end{document}